\newtheorem{theorem}{Theorem}[section]
\newtheorem{lemma}[theorem]{Lemma}
\newtheorem{corollary}[theorem]{Corollary}
\theoremstyle{definition}
\newtheorem{remark}[theorem]{Remark}
\newcommand{\R}{\mathbb R}%
\newcommand{\C}{\mathbb C}%
\newcommand{\N}{\mathbb N}%
\newcommand{\z}{\mathfrak z}%
\newcommand{\mv}{\mathfrak v}%
\newcommand{\J}{\mathscr J}%
\numberwithin{equation}{section}
\renewcommand\subsubsection{\@secnumfont}{\bfseries}%
\renewcommand\subsubsection{\@startsection{subsubsection}{3}
  \z@{.5\linespacing\@plus.7\linespacing}{-.5em}%
  {\normalfont\bfseries}}
\begin{document}

\title[Regularity and pointwise convergence]{Regularity and pointwise convergence of solutions of the Schr\"odinger operator with radial initial data on Damek-Ricci spaces }

\author[Utsav Dewan]{Utsav Dewan}
\address{Stat-Math Unit, Indian Statistical Institute, 203 B. T. Rd., Kolkata 700108, India}
\email{utsav\_r@isical.ac.in}

\subjclass[2020]{Primary 35J10, 43A85; Secondary 22E30, 43A90}

\keywords{Pointwise convergence, Schr\"odinger operator, Damek-Ricci spaces, Radial functions.}

\begin{abstract}
One of the most celebrated problems in Euclidean Harmonic analysis is the Carleson's problem: determining the optimal regularity of the initial condition $f$ of the
Schr\"odinger equation given by
\begin{equation*}
\begin{cases}
	 i\frac{\partial u}{\partial t} =\Delta u\:,\:  (x,t) \in \mathbb{R}^n \times \mathbb{R} \\
	u(0,\cdot)=f\:, \text{ on } \mathbb{R}^n \:,
	\end{cases}
\end{equation*}
in terms of the index $\alpha$ such that $f$ belongs to the inhomogeneous Sobolev space $H^\alpha(\mathbb{R}^n)$ , so that the solution of the Schr\"odinger operator $u$ converges pointwise to $f$, $\displaystyle\lim_{t \to 0+} u(x,t)=f(x)$, almost everywhere. In this article, we consider the Carleson's problem for the Schr\"odinger equation with radial initial data on Damek-Ricci spaces and obtain the sharp bound up to the endpoint $\alpha \ge 1/4$, which agrees with the classical Euclidean case.
\end{abstract}

\maketitle
\tableofcontents

\section{Introduction}

The story of this paper begins in 1980, when Carleson \cite{C} posed and solved the problem of determining the amount of regularity required on the initial data $f$, so that the solution of the one dimensional Schr\"odinger equation
\begin{equation*}
\begin{cases}
	 i\frac{\partial u}{\partial t} =\frac{\partial^2 u}{\partial x^2} \:,\:  (x,t) \in \R \times \R \\
	u(0,\cdot)=f\:,\: \text{ on } \R \:,
	\end{cases}
\end{equation*}
converges pointwise to $f$, that is,
\begin{equation*}
\displaystyle\lim_{t \to 0+}u(t,x)=f(x)\:,
\end{equation*}
for almost every $x \in \R$. More precisely, Carleson proved that the pointwise convergence does hold true if $f$ is compactly supported and H\"older continuous of order $> 1/4$. On the other hand, Carleson also showed that there exists a compactly supported function $f$ on $\R$ which is H\"older continuous of order $<1/8$, for which the pointwise limit of the solution diverges almost everywhere. In 1982, Dahlberg and Kenig \cite{DK} noted that the only property of compactly supported H\"older continuous functions of order $> 1/4$, used by Carleson is that these functions belong to the inhomogeneous Sobolev space $H^{1/4}(\R)$, where
\begin{equation*}
H^\alpha(\R):= \left\{f \in L^2(\R): \int_\R {\left(1+{|\xi|}^2\right)}^\alpha\: {\left|\hat{f}(\xi)\right|}^2\: d\xi < \infty\right\}\:.
\end{equation*}
Then they showed that the result of Carleson is actually sharp by constructing a counter-example of an $H^\alpha(\R)$ function for $\alpha < 1/4$, for which the pointwise limit of the solution of the Schr\"odinger equation diverges almost everywhere. Subsequently they posed the question in higher dimensions, to determine the regularity of the initial condition in terms of its Sobolev index, so that the solution of the Schr\"odinger operator converges pointwise to its initial data almost everywhere. This is the celebrated Carleson's problem and for the last five decades, it has stimulated extensive research.

\medskip

In 1983, Cowling \cite{Cowling} studied the problem for a general class of self-adjoint operators on $L^2(X)$ for a measure space $X$  and obtained $\alpha > 1$, to be a sufficient condition for the associated Schr\"odinger operator. For $\R^n$, in 1987, Sj\"olin \cite{Sjolin} improved the bound to $\alpha > 1/2$, by means of a local smoothing effect. This improvement was also independently obtained by Vega \cite{Vega} in 1988. In 1990, Prestini \cite{Prestini} solved the Carleson's problem while specializing to radial functions and obtained the dimension independent sharp bound $\alpha \ge 1/4$. After continuous improvements made by several experts in the field, recently in 2017, the sharp bound $\alpha > 1/3$, up to the endpoint was obtained for $\R^2$ by Du-Guth-Li  \cite{DGL} using polynomial partitioning and decoupling. Then in 2019, Du-Zhang \cite{DZ} obtained the bound $\alpha > n/2(n+1)$, for dimensions $n \ge 3$, by using decoupling and induction on scales. This bound is sharp up to the endpoint due to a counterexample by Bourgain \cite{Bourgain}. Thus in the Euclidean setting, the Carleson's problem has been fully resolved.

\medskip

In the case of a Riemannian manifold, where the volume measure neither satisfies any doubling property nor does it admit dilations, the above mentioned Euclidean machineries work no longer and thus it offers a fresh challenge. Recently in this regard, Wang and Zhang considered the problem in the non-Euclidean setting of the Real Hyperbolic spaces and proved that $\alpha > 1/2$, is a sufficient condition for the pointwise convergence to hold true \cite[Theorem 1.1]{WZ}. Our aim in this paper is to show that the above regularity condition can be improved from $\alpha > 1/2$ to the sharp bound up to the endpoint $\alpha \ge 1/4$, while specializing to radial initial data, on the vastly general setting of Damek-Ricci spaces, generalizing Prestini's result in $\R^n$ \cite{Prestini}.

\medskip

Real Hyperbolic spaces are the simplest examples of rank one Riemannian Symmetric spaces of noncompact type. On the other hand, the class of rank one Riemannian Symmetric spaces of noncompact type (barring the degenerate case of the Real Hyperbolic spaces) is contained in (and in fact accounts for a very small subclass of) the more general class of Damek-Ricci spaces \cite{ADY}. They are also known as Harmonic $NA$ groups. These spaces $S$ are non-unimodular, solvable extensions of Heisenberg type groups $N$, obtained by letting $A=\R^+$ act on $N$ by homogeneous dilations. We refer to Section 2 for more details about their structure and analysis thereon.

\medskip

Let $\Delta$ be the Laplace-Beltrami operator
on $S$ corresponding to the left-invariant Riemannian metric. Its $L^2$-spectrum is the half line $(-\infty,  -Q^2/4]$, where $Q$ is the homogeneous dimension of $N$. The Schr\"odinger equation on $S$ is given by
\begin{equation} \label{schrodinger}
\begin{cases}
	 i\frac{\partial u}{\partial t} =\Delta u\:,\:  (x,t) \in S \times \R \\
	u(0,\cdot)=f\:,\: \text{ on } S \:.
	\end{cases}
\end{equation}

\medskip

Then for a radial function $f$ belonging to the $L^2$-Schwartz class (\cite{Anker}, \cite[p. 652]{ADY}),
\begin{equation} \label{schrodinger_soln}
S_t f(x):= \int_{0}^\infty \varphi_\lambda(x)\:e^{it\left(\lambda^2 + \frac{Q^2}{4}\right)}\:\hat{f}(\lambda)\: {|{\bf c}(\lambda)|}^{-2}\: d\lambda\:,
\end{equation}
is the solution to (\ref{schrodinger}), where $\varphi_\lambda$ are the spherical functions, $\hat{f}$ is the Spherical Fourier transform of $f$ and ${\bf c}(\cdot)$ denotes the Harish-Chandra's ${\bf c}$-function. Indeed in this case, by a simple application of the Dominated Convergence Theorem, it follows that for all $x \in S$,
\begin{equation*}
\displaystyle\lim_{t \to 0+} S_tf(x)=f(x)\:.
\end{equation*}

\medskip

Thus one is naturally interested in the Carleson's problem in the present setting of Damek-Ricci spaces. We quantify the problem by defining the Sobolev spaces on $S$ \cite{APV}:
\begin{equation} \label{sobolev_space_defn}
H^\alpha(S):=\left\{f \in L^2(S): {\|f\|}_{H^\alpha(S)}:= {\left(\int_0^\infty {\left(\lambda^2 + \frac{Q^2}{4}\right)}^\alpha {|\hat{f}(\lambda)|}^2 {|{\bf c}(\lambda)|}^{-2} d\lambda\right)}^{1/2}< \infty\right\}.
\end{equation}

\medskip

In this regard, we define the associated maximal function,
\begin{equation} \label{maximal_fn_defn}
S^* f(x):= \displaystyle\sup_{0<t<4/Q^2} \left|S_tf(x)\right|\:.
\end{equation}
Our main result is the following $L^1_{loc}$ boundedness for the maximal function (see section $2$ for the definitions of $m_\mv$ and $m_\z$):
\begin{theorem} \label{maximal_bddness_thm}
Let $f$ be a radial $L^2$-Schwartz class function on $S$ and let $B_R$ denote the geodesic ball centered at the identity with radius $R>0$. Then
\begin{equation} \label{maximal_bddness_inequality}
{\|S^*f\|}_{L^1\left(B_R\right)} \le c\: {\|f\|}_{H^\alpha(S)}\:,
\end{equation}
holds for some constant $c>0$, depending only on the dimensions $m_\mv,m_\z$ and radius $R$, for all $R>0$ and $\alpha \ge 1/4$.
\end{theorem}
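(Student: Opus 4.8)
The plan is to reduce the $L^1(B_R)$ maximal bound, by linearization together with a $TT^*$-type duality, to an $L^1$ estimate on a single kernel built out of the spherical functions, and then to control that kernel by feeding in the Harish-Chandra asymptotics of $\varphi_\lambda$ and the classical estimates for the Plancherel density $|{\bf c}(\lambda)|^{-2}$, thereby bringing matters down to one-dimensional oscillatory integrals of the type handled by Prestini in $\R^n$.

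First I would linearize. Since $f$ lies in the $L^2$-Schwartz class, for each $x$ the map $t\mapsto S_tf(x)$ is continuous, so $S^*f$ is measurable and there are measurable functions $t:B_R\to(0,4/Q^2)$ and $\theta:B_R\to\R$ with $S^*f(x)=S_{t(x)}f(x)\,e^{-i\theta(x)}$ (up to an $\varepsilon$ we let tend to $0$, the supremum need not be attained). Writing $\hat f(\lambda)=(\lambda^2+Q^2/4)^{-\alpha/2}F(\lambda)$, so that $\|F\|_{L^2(|{\bf c}(\lambda)|^{-2}d\lambda)}=\|f\|_{H^\alpha(S)}$, inserting \eqref{schrodinger_soln} and using Fubini, one gets
\[
\|S^*f\|_{L^1(B_R)}=\int_0^\infty F(\lambda)\,(\lambda^2+Q^2/4)^{-\alpha/2}\,\Psi(\lambda)\,|{\bf c}(\lambda)|^{-2}\,d\lambda,\quad \Psi(\lambda):=\int_{B_R}\varphi_\lambda(x)\,e^{it(x)(\lambda^2+Q^2/4)-i\theta(x)}\,dx.
\]
By Cauchy--Schwarz this is at most $\|f\|_{H^\alpha(S)}\cdot\mathcal N$ with $\mathcal N^2=\int_0^\infty(\lambda^2+Q^2/4)^{-\alpha}|\Psi(\lambda)|^2|{\bf c}(\lambda)|^{-2}d\lambda$; expanding the square (recall $\varphi_\lambda$ is real for real $\lambda$) and using Fubini,
\[
\mathcal N^2=\int_{B_R}\!\int_{B_R}e^{-i(\theta(x)-\theta(y))}\,\mathcal K(x,y)\,dx\,dy,\quad \mathcal K(x,y):=\int_0^\infty(\lambda^2+Q^2/4)^{-\alpha}\varphi_\lambda(x)\varphi_\lambda(y)\,e^{i(t(x)-t(y))(\lambda^2+Q^2/4)}|{\bf c}(\lambda)|^{-2}d\lambda.
\]
Hence it suffices to prove $\int_{B_R}\int_{B_R}|\mathcal K(x,y)|\,dx\,dy\le C_R$, uniformly over all measurable $t:B_R\to(0,4/Q^2)$, the restriction on the time interval entering only through $|t(x)-t(y)|<4/Q^2$. (This $L^1$ bound is exactly what is needed to pass, by the usual density argument, from the Schwartz case to almost-everywhere convergence of $S_tf$ on all of $S$.)

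Next I would insert the structure of $\varphi_\lambda$ in radial coordinates, writing $r=d(x,e),\ r'=d(y,e)\in(0,R)$. On the range $r,r'\ge 1$ I would use the Harish-Chandra expansion $\varphi_\lambda={\bf c}(\lambda)\Phi_\lambda+{\bf c}(-\lambda)\Phi_{-\lambda}$ with $\Phi_{\pm\lambda}(r)=e^{(\pm i\lambda-Q/2)r}\sum_{l\ge0}\Gamma_l(\pm\lambda)e^{-lr}$, $\Gamma_0\equiv1$, together with ${\bf c}(-\lambda)=\overline{{\bf c}(\lambda)}$; then $\varphi_\lambda(x)\varphi_\lambda(y)|{\bf c}(\lambda)|^{-2}$ splits into a main term $e^{-Q(r+r')/2}\sum_{\pm,\pm}\gamma_{\pm,\pm}(\lambda)\,e^{i(\pm r\pm r')\lambda}$, where each $\gamma_{\pm,\pm}$ is a symbol of order $0$ assembled from ${\bf c}(\pm\lambda)^{-1}$, plus remainders carrying extra decay $e^{-l(r+r')}$ and therefore summable. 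The part where $r$ or $r'$ is $<1$, where the Harish-Chandra series need not converge, I would handle separately through the Jacobi-function realization of $\varphi_\lambda$ ($\varphi_\lambda$ bounded, with Bessel-type decay in $\lambda r$ and oscillatory factors $e^{\pm i\lambda r}$ once $\lambda r\gtrsim1$), which makes the corresponding part of the double integral converge with room to spare. After these reductions $\mathcal K(x,y)$ is, up to acceptable errors, a finite sum of terms
\[
e^{-Q(r+r')/2}\int_0^\infty(\lambda^2+Q^2/4)^{-\alpha}\,\gamma(\lambda)\,e^{i(a\lambda+b\lambda^2)}\,d\lambda,\qquad a=\pm r\pm r'\in[-2R,2R],\ \ b=t(x)-t(y)\in(-4/Q^2,4/Q^2),
\]
with $\gamma$ a symbol of order $0$.

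The heart of the proof is then the one-dimensional oscillatory estimate for these integrals. Splitting $\int_0^1+\int_1^\infty$ (the first piece being $O(1)$ because $2\alpha<1$), applying stationary phase in $\lambda$ about the critical point $\lambda_*=-a/(2b)$ when it falls in $\lambda\ge1$, and using van der Corput / repeated integration by parts away from it, yields
\[
\left|\int_0^\infty(\lambda^2+Q^2/4)^{-\alpha}\gamma(\lambda)e^{i(a\lambda+b\lambda^2)}d\lambda\right|\ \lesssim\ |b|^{2\alpha-1/2}\,|a|^{-2\alpha}\ +\ |b|^{\alpha-1/2}\ +\ 1,
\]
the first term being the stationary-phase contribution $|b|^{-1/2}|\lambda_*|^{-2\alpha}$ and the second the $a$-free contribution relevant when $|a|\lesssim|b|^{1/2}$. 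The key structural point — and the reason the sharp exponent is exactly $1/4$, endpoint included — is that at $\alpha=1/4$ the stationary-phase term collapses to $|a|^{-1/2}$ with no singularity in $b$, so that $|\mathcal K(x,y)|\lesssim_R |r-r'|^{-1/2}+|t(x)-t(y)|^{-1/4}+1$, whereas for $\alpha<1/4$ the factor $|b|^{2\alpha-1/2}$ blows up as $b\to0$ and the argument (correctly) fails. Granting this, the proof finishes by integrating over $B_R\times B_R$: the terms $1$ and $|r-r'|^{-1/2}$ are trivially integrable there, while the term $|t(x)-t(y)|^{-1/4}$ — which, arising only through the stationary-phase mechanism, comes paired with the localization $|r-r'|\lesssim|t(x)-t(y)|^{1/2}$ — is integrated first in the radial variable of $y$ over an interval of length $\lesssim|t(x)-t(y)|^{1/2}$, gaining a factor $|t(x)-t(y)|^{1/2}$ and leaving the integrable exponent $|t(x)-t(y)|^{1/4}$; the volume densities are bounded on $B_R$ and $e^{-Q(r+r')/2}\le1$, which gives \eqref{maximal_bddness_inequality}. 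I expect the delicate part to be precisely this endpoint bookkeeping — keeping the van der Corput / stationary-phase estimates sharp enough (the critical amplitude $\lambda^{-1/2}$ against the quadratic phase) and uniform over the arbitrary measurable time function $t(\cdot)$ — which is exactly where Prestini's Euclidean argument must be transplanted to the Damek--Ricci setting.
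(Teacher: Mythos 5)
Your proposal follows essentially the same route as the paper: linearize the supremum, use a Cauchy--Schwarz / Fubini ($TT^*$-type) duality to reduce the $L^1(B_R)$ bound on the maximal operator to an $L^1(B_R\times B_R)$ bound on a kernel, feed in the oscillatory structure of $\varphi_\lambda$, and finish with a one-dimensional oscillatory-integral estimate in the spirit of Prestini whose whole point is that at $\alpha=1/4$ the stationary-phase contribution collapses to $|r-r'|^{-1/2}$, uniformly in the arbitrary measurable time function. The paper's proof is a somewhat different organization of the very same ideas: it decomposes $\varphi_\lambda$ \emph{first} (into the pieces $T_1,\dots,T_8$) and applies Cauchy--Schwarz to each piece, whereas you apply Cauchy--Schwarz once and then decompose the kernel $\mathcal K$; these are the same computation run in different order.

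A few places where you are less precise than the paper, and where the details actually matter. (i) In the small-$s$ regime you invoke ``the Jacobi-function realization'' with ``Bessel-type decay.'' What the paper actually uses there is Astengo's Bessel-series expansion (Lemma~\ref{bessel_series_expansion}), which produces the main term $c_0\bigl(s^{n-1}/A(s)\bigr)^{1/2}\J_{(n-2)/2}(\lambda s)$ together with an error $\mathscr{E}_1$ of size $(\lambda s)^{-(n+1)/2}$; this explicit error bound is what lets $T_5$ be handled without oscillation. Moreover the oscillation $e^{\pm i\lambda s}$ can be extracted only when $\lambda s\ge B(n)$, so the correct splitting in $\lambda$ is at $B(n)/s$ (the paper's $T_1$/$T_2$), not at $\lambda=1$; your ``$\int_0^1+\int_1^\infty$'' split is applied too late in the argument to account for this. (ii) Your oscillatory bound carries the extra term $|t(x)-t(y)|^{\alpha-1/2}$; as you note, it only occurs under the localization $|r-r'|\lesssim|t(x)-t(y)|^{1/2}$, and under that constraint it is simply $\lesssim|r-r'|^{-1/2}$. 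The paper's Lemma~\ref{oscillatory_integral_estimate} (imported from Prestini's Lemma~2) states the bound directly as $c\,|s-s'|^{-1/2}$ with no dependence on $t(s)-t(s')$, which is cleaner and avoids the somewhat circular phrase ``integrate first in the radial variable of $y$ over an interval of length $\lesssim|t(x)-t(y)|^{1/2}$'' --- that interval depends on $y$ itself. (iii) You assert that the Harish-Chandra-type remainders are ``summable'' without quantifying the coefficients; the paper needs the Anker--Pierfelice--Vallarino estimate $|\Gamma_\mu(\lambda)|\le C\mu^d(1+|\lambda|)^{-1}$ (equation (\ref{coefficient_estimate})) both to sum the series and to gain the extra $(1+|\lambda|)^{-1}$ decay that makes the error term $T_8$ integrable after Cauchy--Schwarz. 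None of these points represents a wrong turn, but they are the genuinely technical inputs that a complete proof must supply.
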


\medskip

Then by standard arguments in the literature (for instance see the proof of Theorem 5 of \cite{Sjolin}), we get the positive result on pointwise convergence:
\begin{corollary} \label{pointwise_conv_cor}
The solution $u(x,t)$ of (\ref{schrodinger}) converges pointwise to the radial initial data $f$,
\begin{equation*}
\displaystyle\lim_{t \to 0+} u(x,t)=f(x)\:,
\end{equation*}
for almost every $x$ in $S$ with respect to the left Haar measure on $S$, whenever $f \in H^\alpha(S)$ with $\alpha \ge 1/4$.
\end{corollary}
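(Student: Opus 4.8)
The plan is to deduce the corollary from Theorem \ref{maximal_bddness_thm} by the standard maximal-function-implies-convergence argument, combined with a density argument. First I would fix $R > 0$ and work on the geodesic ball $B_R$; since $S$ is covered by countably many such balls, it suffices to establish almost-everywhere convergence on each $B_R$. The key point is that for $f \in H^\alpha(S)$ with $\alpha \ge 1/4$ (not necessarily radial a priori — but in our setting $f$ is radial, so I would carry the radiality hypothesis along), the solution operator $S_t f$ defined by \eqref{schrodinger_soln} extends continuously from the radial $L^2$-Schwartz class to radial functions in $H^\alpha(S)$, and the maximal operator $S^* f = \sup_{0 < t < 4/Q^2} |S_t f|$ satisfies the a priori bound \eqref{maximal_bddness_inequality} on this dense subclass.

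The core of the argument is the following: define, for $f \in H^\alpha(S)$ radial, the oscillation
\begin{equation*}
\Omega f(x) := \limsup_{t \to 0+} S_t f(x) - \liminf_{t \to 0+} S_t f(x)\:,
\end{equation*}
so that pointwise convergence at $x$ is equivalent to $\Omega f(x) = 0$. Given $\varepsilon > 0$, by density of the radial $L^2$-Schwartz class in radial $H^\alpha(S)$ (with respect to the $H^\alpha$ norm), choose a radial Schwartz function $g$ with ${\|f - g\|}_{H^\alpha(S)} < \varepsilon$. For $g$ itself, the Dominated Convergence Theorem argument recorded in the excerpt gives $\lim_{t \to 0+} S_t g(x) = g(x)$ for \emph{every} $x \in S$, hence $\Omega g \equiv 0$. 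Since $\Omega(f) = \Omega(f - g) \le 2 S^*(f - g)$ pointwise, Theorem \ref{maximal_bddness_thm} yields
\begin{equation*}
{\|\Omega f\|}_{L^1(B_R)} \le 2\, {\|S^*(f-g)\|}_{L^1(B_R)} \le 2c\, {\|f - g\|}_{H^\alpha(S)} < 2c\,\varepsilon\:.
\end{equation*}
As $\varepsilon > 0$ is arbitrary, ${\|\Omega f\|}_{L^1(B_R)} = 0$, so $\Omega f = 0$ almost everywhere on $B_R$ with respect to the left Haar measure, which is exactly the asserted pointwise convergence on $B_R$. Exhausting $S$ by the balls $B_R$, $R \in \N$, gives the claim on all of $S$.

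Two points need care, and they constitute the only real obstacles. First, one must justify that $S_t f(x)$ makes sense pointwise (not merely as an $L^2$-limit) for $f \in H^\alpha(S)$, i.e.\ that the maximal bound \eqref{maximal_bddness_inequality} genuinely extends the definition of $S^* f$ off the Schwartz class: this is routine — take a sequence of radial Schwartz functions $f_n \to f$ in $H^\alpha$, note ${\|S^*(f_n - f_m)\|}_{L^1(B_R)} \le c\,{\|f_n - f_m\|}_{H^\alpha(S)} \to 0$, so $S_t f_n$ is Cauchy in an appropriate sense and one extracts a subsequence converging pointwise a.e., defining $S_t f$ as that limit; the bound \eqref{maximal_bddness_inequality} persists by Fatou. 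Second, one must verify that the radial $L^2$-Schwartz class is dense in radial $H^\alpha(S)$; this follows because the Spherical Fourier transform is an isometry (up to constants) between radial $L^2(S)$ and $L^2((0,\infty), {|{\bf c}(\lambda)|}^{-2} d\lambda)$ mapping the $L^2$-Schwartz class onto a dense subspace, and the weight $(\lambda^2 + Q^2/4)^\alpha$ defining the $H^\alpha$ norm in \eqref{sobolev_space_defn} is a locally bounded multiplier, so truncation and mollification on the Fourier side produce Schwartz approximants converging in $H^\alpha$. Neither step presents a genuine difficulty; the entire weight of the corollary rests on Theorem \ref{maximal_bddness_thm}, and the deduction above is the standard template (as in the proof of Theorem 5 of \cite{Sjolin}).
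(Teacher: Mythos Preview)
Your argument is the standard maximal-function-plus-density deduction that the paper itself invokes by citing Sj\"olin's Theorem~5, so the approach matches and the structure is correct. One genuine imprecision, however: your oscillation $\Omega f(x) = \limsup S_t f(x) - \liminf S_t f(x)$ vanishing only tells you that $\lim_{t\to 0+} S_t f(x)$ \emph{exists} a.e., not that it equals $f(x)$, so the sentence ``which is exactly the asserted pointwise convergence'' is not justified as written. The fix is routine: either work instead with $\Lambda f(x):=\limsup_{t\to 0+}|S_t f(x)-f(x)|$ and bound it by $S^*(f-g)(x)+|f(x)-g(x)|$, controlling the second term via $H^{1/4}(S)\subset L^2(S)\subset L^1(B_R)$; or keep your $\Omega f$ argument and add one line observing that $S_t f\to f$ in $L^2(S)$ (since $e^{it(\lambda^2+Q^2/4)}\to 1$ boundedly on the spectral side), which identifies the a.e.\ limit with $f$.
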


\begin{remark}
As $N$ has been assumed to be noncommutative, the class of Damek-Ricci spaces does not include the degenerate case of the Real Hyperbolic spaces. But a curious reader may observe that the arguments presented in this article can be carried out verbatim to obtain analogues of Theorem \ref{maximal_bddness_thm} and Corollary \ref{pointwise_conv_cor} for the Real Hyperbolic spaces.
\end{remark}

In this regard, we have the following which illustrates the sharpness of the above results:
\begin{theorem} \label{sharpness_thm}
For the 3-dimensional Real Hyperbolic space, $\mathbb{H}^3 \cong SL(2,\C)/SU(2)$,  the inequality (\ref{maximal_bddness_inequality}) fails if $\alpha < 1/4$.
\end{theorem}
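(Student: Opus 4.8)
The plan is to transplant the classical Euclidean counterexample of Dahlberg--Kenig \cite{DK} --- in the radial, one-dimensional form that goes back to Prestini \cite{Prestini} --- to $\mathbb{H}^3$, exploiting the fact that radial analysis on $\mathbb{H}^3 \cong SL(2,\C)/SU(2)$ is essentially one-dimensional Fourier analysis of odd functions. For $\mathbb{H}^3$ one has $\rho = 1$ (hence $Q = 2$ and $Q^2/4 = 1$), the volume density of a radial function is a constant multiple of $(\sinh r)^2$, the spherical functions are the elementary expressions
\begin{equation*}
\varphi_\lambda(r) = \frac{\sin(\lambda r)}{\lambda \sinh r}\:,
\end{equation*}
and the Plancherel density is $|{\bf c}(\lambda)|^{-2} = c_0\, \lambda^2$ for some $c_0 > 0$. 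Associating to a radial function $f$ on $\mathbb{H}^3$ the function $g(r) := (\sinh r)\, f(r)$, extended to an odd function on $\R$, a direct computation from the spherical inversion gives $\lambda\, \hat f(\lambda) = c \int_0^\infty g(r)\, \sin(\lambda r)\, dr$, which for odd $g$ is a constant multiple of the Euclidean Fourier transform of $g$ at $\lambda$. Since the weight $\lambda^2 + Q^2/4 = \lambda^2 + 1$ is exactly the Euclidean Bessel weight and the $\lambda^2$ in $|{\bf c}(\lambda)|^{-2}$ absorbs the $\lambda^{-2}$ coming from $\varphi_\lambda$, one obtains the norm identification ${\|f\|}_{H^\alpha(S)} \asymp {\|g\|}_{H^\alpha(\R)}$, and $f \mapsto g$ is, up to constants, a bijection from radial $L^2$-Schwartz functions on $\mathbb{H}^3$ onto odd Schwartz functions on $\R$ (the Schwartz correspondence being read off on the Fourier side from the characterisation of the image of the spherical transform on $\mathbb{H}^3$; see \cite{Anker}, \cite{ADY}).

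Feeding these identities into (\ref{schrodinger_soln}), the unimodular factor $e^{it}$ inside $e^{it(\lambda^2 + 1)}$ splits off and one finds
\begin{equation*}
(\sinh r)\, S_t f(r) = c\, e^{it}\, v(r,t)\:,
\end{equation*}
where $v(\cdot, t)$ is the one-dimensional Euclidean Schr\"odinger evolution (with the sign of (\ref{schrodinger})) of the odd function $g$; taking moduli and supremising over $0 < t < 4/Q^2 = 1$ gives $(\sinh r)\, S^* f(r) = c\, v^*(r)$ with $v^*(r) := \sup_{0 < t < 1}|v(r,t)|$. As $\sinh r$ is bounded above and below on $[1,2]$, for $R \ge 2$ we get ${\|S^* f\|}_{L^1(B_R)} = c \int_0^R v^*(r)\, \sinh r\, dr \gtrsim \int_1^2 v^*(r)\, dr$. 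Hence, if (\ref{maximal_bddness_inequality}) held on $\mathbb{H}^3$ for some $\alpha < 1/4$, we would obtain the one-dimensional estimate ${\|v^*\|}_{L^1([1,2])} \lesssim {\|g\|}_{H^\alpha(\R)}$ for every odd Schwartz $g$ --- the sharp Carleson maximal bound, which fails below $1/4$. To realise the failure, take $g = g_N$ whose (Euclidean) Fourier transform is a smoothed version of $\mathrm{sgn}(\xi)\, \mathbf{1}_{\{N \le |\xi| \le N + N^{1/2}\}}$, so that ${\|g_N\|}_{H^\alpha(\R)} \asymp N^{\alpha + 1/4}$, while the admissible choice $t(r) := r/(2N) \in (0,1)$ for $r \in [1,2]$ places the stationary point of the phase $t\xi^2 - r\xi$ at the endpoint $\xi = N$ of the frequency window (over which the phase then oscillates by $O(1)$), giving $|v_N(r, t(r))| \gtrsim N^{1/2}$, while trivially $v_N^* \le N^{1/2}$; thus ${\|v_N^*\|}_{L^1([1,2])} \asymp N^{1/2}$. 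Transplanting $g_N$ to the radial $L^2$-Schwartz function $f_N$ on $\mathbb{H}^3$,
\begin{equation*}
\frac{{\|S^* f_N\|}_{L^1(B_R)}}{{\|f_N\|}_{H^\alpha(S)}} \gtrsim \frac{N^{1/2}}{N^{\alpha + 1/4}} = N^{1/4 - \alpha} \longrightarrow \infty \quad (N \to \infty)\:,
\end{equation*}
since $\alpha < 1/4$; this contradicts (\ref{maximal_bddness_inequality}) and proves Theorem \ref{sharpness_thm} (for $R < 2$ one runs the same stationary-phase analysis with $g_N$ localised to $(0,R)$, which is valid as long as $R < 2N$).

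The genuinely new ingredient is the reduction in the first paragraph: once the $\mathbb{H}^3$ statement has been rewritten as its one-dimensional Euclidean counterpart, the rest is classical. I expect the two points demanding care to be: (i) checking precisely that the transplanted functions lie in the $L^2$-Schwartz class --- i.e.\ pinning down the image of the spherical transform on $\mathbb{H}^3$ via the explicit $\varphi_\lambda$ and ${\bf c}$-function, including the behaviour of $\hat f$ near $\lambda = 0$; and (ii) the lower bound $v_N^*(r) \gtrsim N^{1/2}$ on $[1,2]$ --- a routine stationary-phase/coherence estimate, but one in which the time $t = t(r)$ depends on the spatial variable, so one must verify that $t(r)$ remains inside the admissible window $(0, 4/Q^2)$ and that the companion non-stationary term is of lower order. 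Neither is serious; the crux is simply that on $\mathbb{H}^3$ the Plancherel density is polynomial and the spherical function elementary, so the Carleson endpoint $1/4$ is inherited verbatim from $\R$.
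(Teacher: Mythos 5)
Your proposal is correct and rests on the same central observation as the paper's proof: on $\mathbb{H}^3$ the spherical function $\varphi_\lambda(r)=\sin(\lambda r)/(\lambda\sinh r)$ and the Plancherel density $\lambda^2$ are elementary, so the radial Schr\"odinger problem transfers exactly to a Euclidean one, and the Carleson threshold $1/4$ is inherited. The implementation differs in two respects. First, the paper performs the transplantation through the Abel transform $\mathscr{A}=\mathscr{A}_2^{-1}\circ\mathscr{A}_1$ (via the commutative diagram linking $\mathscr{S}^2(G//K)$, ${\mathscr{S}(\R)}_{even}$ and $\mathscr{S}(\R^3)_{radial}$) to land in \emph{radial Schwartz functions on $\R^3$} with $\hat f = \mathscr{F}(\mathscr{A}f)$, whereas you go directly to the one-dimensional odd picture by the substitution $g(r)=(\sinh r)f(r)$; these are two faces of the same reduction, since the $\R^3$ radial Fourier transform is itself a 1-D sine transform in disguise. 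Second, and more substantively, the paper does \emph{not} reconstruct the Euclidean counterexample: having established the norm identity $\|f\|_{H^\beta(G/K)}=c\,\|\mathscr{A}f\|_{H^\beta(\R^3)}$ and the pointwise relation $S_tf(a_s)=(e^{it}s/\sinh s)\,\tilde S_t(\mathscr{A}f)(s)$, it simply invokes Prestini's Theorem 1 for radial functions on $\R^3$ and pulls the failing function back through $\mathscr{A}^{-1}$. You instead rebuild the Dahlberg--Kenig-type example from scratch (a smoothed signed indicator supported in $N\le|\xi|\le N+N^{1/2}$ with $t(r)=r/(2N)$), which is more self-contained but amounts to reproving the known one-dimensional sharpness. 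Both routes are sound. The two delicate points you flag --- preservation of the $L^2$-Schwartz class under the transplantation (here, that $g$ odd Schwartz forces $\hat f(\lambda)=c\,\tilde g(\lambda)/\lambda$ to be even Schwartz, so $f\in\mathscr{S}^2(G//K)$ by Anker's theorem), and keeping the spatially-dependent time $t(r)$ inside $(0,4/Q^2)=(0,1)$ --- are precisely the ones the paper disposes of by citing the Schwartz isomorphism theorems and by fixing the ball $B_{100}$ from the outset.
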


In \cite{WZ}, the authors obtain the result on the Real Hyperbolic spaces by means of a local smoothing effect, building up on Doi's ideas \cite{Doi} on the interaction between smoothing effects of the Schr\"odinger evolution group and the behaviour of the geodesic flow on connected, simply connected, complete Riemannian manifolds of constant non-positive sectional curvature. The main idea in our article however is completely different and is in fact quite simple: keep track of the oscillation afforded by both the spherical functions as well as the multiplier corresponding to the Schr\"odinger operator to improve the bound from $\alpha > 1/2$, to $\alpha \ge 1/4$. Our approach is motivated by the work of Prestini \cite{Prestini}. However, unlike in the Euclidean case, the spherical function on a Damek-Ricci space is not merely a single Bessel function. In fact, it only admits certain series expansions (the Bessel series expansion and a series expansion similar to the classical Harish-Chandra series expansion) depending on the geodesic distance from the identity of the group. Then to capture the aforementioned oscillation from these expansions, to decompose the linearized maximal function into suitable pieces and then to estimate each of them individually, become somewhat technical. In this regard, we also need the improved estimates obtained by Anker-Pierfelice-Vallarino on the coefficients of the above series expansion of the spherical functions (see (\ref{coefficient_estimate})). In order to get the counter-example for lower regularity, we make use of the Abel transform on Ch\'ebli-Trim\`eche Hypergroups \cite{BX}. We also indicate about analogues of Theorem \ref{maximal_bddness_thm}, Corollary \ref{pointwise_conv_cor} and Theorem \ref{sharpness_thm} for the fractional Schr\"odinger equation (see subsection $6.1$).

\medskip

This article is organized as follows. In section $2$, we recall the essential preliminaries about Damek-Ricci spaces and Spherical Fourier Analysis on these spaces. In section $3$, we obtain estimates of an oscillatory integral which is the heart of the matter. In section $4$, we present the proof of Theorem \ref{maximal_bddness_thm}. In section $5$, we prove Theorem \ref{sharpness_thm}. Finally, in section $6$, we conclude by making some remarks and posing some new problems.

\medskip

Throughout, the symbols `c' and `C' will denote positive constants whose values may change on each occurrence. $\N$ will denote the set of positive integers. We will also use the following notation: $\frac{1}{2}\N= \left\{\frac{n}{2}: n \in \N \cup \{0\}\right\}$. Two non-negative functions $f_1$ and $f_2$ will be said to satisfy $f_1 \asymp f_2$ if there exists a constant $C \ge 1$, so that
\begin{equation*}
\frac{1}{C} f_1 \le f_2 \le Cf_1\:.
\end{equation*}

\section{Preliminaries}
In this section, we will explain the notations and state relevant results on Damek-Ricci spaces. Most of these results can be found in \cite{ADY, APV, A}.

\medskip

Let $\mathfrak n$ be a two-step real nilpotent Lie algebra equipped with an inner product $\langle, \rangle$. Let $\mathfrak{z}$ be the center of $\mathfrak n$ and $\mathfrak v$ its orthogonal complement. We say that $\mathfrak n$ is an $H$-type algebra if for every $Z\in \mathfrak z$ the map $J_Z: \mathfrak v \to \mathfrak v$ defined by
\begin{equation*}
\langle J_z X, Y \rangle = \langle [X, Y], Z \rangle, \:\:\:\: X, Y \in \mathfrak v
\end{equation*}
satisfies the condition $J_Z^2 = -|Z|^2I_{\mathfrak v}$, $I_{\mathfrak v}$ being the identity operator on $\mathfrak v$. A connected and simply connected Lie group $N$ is called an $H$-type group if its Lie algebra is $H$-type. Since $\mathfrak n$ is nilpotent, the exponential map is a diffeomorphism
and hence we can parametrize the elements in $N = \exp \mathfrak n$ by $(X, Z)$, for $X\in \mathfrak v, Z\in \mathfrak z$. It follows from the Baker-Campbell-Hausdorff formula that the group law in $N$ is given by
\begin{equation*}
\left(X, Z \right) \left(X', Z' \right) = \left(X+X', Z+Z'+ \frac{1}{2} [X, X']\right), \:\:\:\: X, X'\in \mathfrak v; ~ Z, Z'\in \mathfrak z.
\end{equation*}
The group $A = \R^+$ acts on an $H$-type group $N$ by nonisotropic dilation: $(X, Z) \mapsto (\sqrt{a}X, aZ)$. Let $S = NA$ be the semidirect product of $N$ and $A$ under the above action. Thus the multiplication in $S$ is given by
\begin{equation*}
\left(X, Z, a\right)\left(X', Z', a'\right) = \left(X+\sqrt aX', Z+aZ'+ \frac{\sqrt a}{2} [X, X'], aa' \right),
\end{equation*}
for $X, X'\in \mathfrak v; ~ Z, Z'\in \mathfrak z; a, a' \in \R^+$.
Then $S$ is a solvable, connected and simply connected Lie group having Lie algebra $\mathfrak s = \mathfrak v \oplus \mathfrak z \oplus \R$ with Lie bracket
\begin{equation*}
\left[\left(X, Z, l \right), \left(X', Z', l' \right)\right] = \left(\frac{1}{2}lX' - \frac{1}{2} l'X, lZ'-lZ + [X, X'], 0\right).
\end{equation*}

We write $na = (X, Z, a)$ for the element $\exp(X + Z)a, X\in \mathfrak v, Z \in \mathfrak z, a\in A$. We note
that for any $Z \in \mathfrak z$ with $|Z| = 1$, $J_Z^2 = -I_{\mathfrak v}$; that is, $J_Z$ defines a complex structure
on $\mathfrak v$ and hence $\mathfrak v$ is even dimensional. $m_\mv$ and $m_z$ will denote the dimension of $\mv$ and $\z$ respectively. Let $n$ and $Q$ denote dimension and the homogenous dimension of $S$ respectively:
\begin{equation*}
n=m_{\mv}+m_\z+1 \:\: \textit{ and } \:\: Q = \frac{m_\mv}{2} + m_\z.
\end{equation*}

\medskip

The group $S$ is equipped with the left-invariant Riemannian metric induced by
\begin{equation*}
\langle (X,Z,l), (X',Z',l') \rangle = \langle X, X' \rangle + \langle Z, Z' \rangle + ll'
\end{equation*}
on $\mathfrak s$. For $x \in S$, we denote by $s=d(e,x)$, that is, the geodesic distance of $x$ from the identity $e$. Then the left Haar measure $dx$ of the group $S$ may be normalized so that
\begin{equation*}
dx= A(s)\:ds\:d\sigma(\omega)\:,
\end{equation*}
where $A$ is the density function given by,
\begin{equation} \label{density_function}
A(s)= 2^{m_\mv + m_\z} \:{\left(\sinh (s/2)\right)}^{m_\mv + m_\z}\: {\left(\cosh (s/2)\right)}^{m_\z} \:,
\end{equation}
and $d\sigma$ is the surface measure of the unit sphere.

\medskip

A function $f: S \to \C$ is said to be radial if, for all $x$ in $S$, $f(x)$ depends only on the geodesic distance of $x$ from the identity $e$. If $f$ is radial, then
\begin{equation*}
\int_S f(x)~dx=\int_{0}^\infty f(s)~A(s)~ds\:.
\end{equation*}

\medskip

We now recall the spherical functions on Damek-Ricci spaces. The spherical functions $\varphi_\lambda$ on $S$, for $\lambda \in \C$ are the radial eigenfunctions of the Laplace-Beltrami operator $\Delta$, satisfying the following normalization criterion
\begin{equation*}
\begin{cases}
 & \Delta \varphi_\lambda = - \left(\lambda^2 + \frac{Q^2}{4}\right) \varphi_\lambda  \\
& \varphi_\lambda(e)=1 \:.
\end{cases}
\end{equation*}
For all $\lambda \in \R$ and $x \in S$, the spherical functions satisfy
\begin{equation*}
\varphi_\lambda(x)=\varphi_\lambda(s)= \varphi_{-\lambda}(s)\:.
\end{equation*}
It also satisfies for all $\lambda \in \R$ and all $s \ge 0$:
\begin{equation} \label{phi_lambda_bound}
\left|\varphi_\lambda(s)\right| \le 1\:.
\end{equation}

\medskip

The spherical functions are crucial as they help us define the Spherical Fourier transform of a ``nice" radial function $f$ (on $S$) in the following way:
\begin{equation*}
\hat{f}(\lambda):= \int_S f(x) \varphi_\lambda(x) dx = \int_0^\infty f(s) \varphi_\lambda(s) A(s) ds\:.
\end{equation*}
The Harish-Chandra ${\bf c}$-function is defined as
\begin{equation}
{\bf c}(\lambda)= \frac{2^{(Q-2i\lambda)} \Gamma(2i\lambda)}{\Gamma\left(\frac{Q+2i\lambda}{2}\right)} \frac{\Gamma\left(\frac{n}{2}\right)}{\Gamma\left(\frac{m_\mv + 4i\lambda+2}{4}\right)}\:,
\end{equation}
for all $\lambda \in \R$. One has the following inversion formula (when valid) for radial functions:
\begin{equation*}
f(x)= C \int_{0}^\infty \hat{f}(\lambda)\varphi_\lambda(x) {|{\bf c}(\lambda)|}^{-2}
d\lambda\:,
\end{equation*}
where $C$ depends only on $m_\mv$ and $m_\z$. Moreover, the Spherical Fourier transform extends to an isometry from the space of radial $L^2$ functions on $S$ onto $L^2\left((0,\infty),C{|{\bf c}(\lambda)|}^{-2} d\lambda\right)$. The following estimate of the weight function of the Plancherel measure will be needed (see \cite[Lemma 4.8]{RS}):
\begin{equation} \label{plancherel_measure}
{|{\bf c}(\lambda)|}^{-2} \asymp \:{|\lambda|}^2 {\left(1+|\lambda|\right)}^{n-3}\:.
\end{equation}

\medskip

For the purpose of our article, we will require both the Bessel series expansion as well as another series expansion (similar to the Harish-Chandra series expansion) of the spherical functions. We first see an expansion of $\varphi_\lambda$ in terms of Bessel functions for points near the identity. But before that, let us define the following normalizing constant in terms of the Gamma functions,
\begin{equation*}
c_0 = 2^{m_\z}\: \pi^{-1/2}\: \frac{\Gamma(n/2)}{\Gamma((n-1)/2)}\:,
\end{equation*}
and the following functions on $\C$, for all $\mu \ge 0$,
\begin{equation*}
\J_\mu(z)= 2^\mu \: \pi^{1/2} \: \Gamma\left(\mu + \frac{1}{2} \right) \frac{J_\mu(z)}{z^\mu},
\end{equation*}
where $J_\mu$ are the Bessel functions \cite[p. 154]{SW}.
\begin{lemma}\cite[Theorem 3.1]{A} \label{bessel_series_expansion}
There exist $R_0, 2<R_0<2R_1$, such that for any $0 \le s \le R_0$, and any integer $M \ge 0$, and all $\lambda \in \R$, we have
\begin{equation*}
\varphi_\lambda(s)= c_0 {\left(\frac{s^{n-1}}{A(s)}\right)}^{1/2} \displaystyle\sum_{l=0}^M a_l(s)\J_{\frac{n-2}{2}+l}(\lambda s) s^{2l} + E_{M+1}(\lambda,s)\:,
\end{equation*}
where
\begin{equation*}
a_0 \equiv 1\:,\: |a_l(s)| \le C {(4R_1)}^{-l}\:,
\end{equation*}
and the error term has the following behaviour
\begin{equation*}
	\left|E_{M+1}(\lambda,s) \right| \le C_M \begin{cases}
	 s^{2(M+1)}  & \text{ if  }\: |\lambda s| \le 1 \\
	s^{2(M+1)} {|\lambda s|}^{-\left(\frac{n-1}{2} + M +1\right)} &\text{ if  }\: |\lambda s| > 1 \:.
	\end{cases}
\end{equation*}
Moreover, for every $0 \le s <2$, the series
\begin{equation*}
\varphi_\lambda(s)= c_0 {\left(\frac{s^{n-1}}{A(s)}\right)}^{1/2} \displaystyle\sum_{l=0}^\infty a_l(s)\J_{\frac{n-2}{2}+l}(\lambda s) s^{2l}\:,
\end{equation*}
is absolutely convergent.
\end{lemma}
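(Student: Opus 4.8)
The plan is to read off the expansion from the ordinary differential equation governing $\varphi_\lambda$. As a radial eigenfunction, $\varphi_\lambda$ is the unique solution on $[0,\infty)$ which is smooth at the origin and satisfies $\varphi_\lambda(0)=1$ of
\[
\varphi_\lambda''(s)+\frac{A'(s)}{A(s)}\,\varphi_\lambda'(s)+\Big(\lambda^2+\frac{Q^2}{4}\Big)\varphi_\lambda(s)=0 ,
\]
the coefficient $A'/A$ being the radial part of $\Delta$. From \eqref{density_function} one has $A(s)=s^{\,n-1}\mathcal{A}(s)$, where $\mathcal{A}(s)=\big(\tfrac{\sinh(s/2)}{s/2}\big)^{m_\mv+m_\z}\big(\cosh(s/2)\big)^{m_\z}$ is even, real-analytic and nonvanishing on a fixed disk $\{|s|<R_1\}$ (its radius governed by the zeros of $\cosh(s/2)$ in $\C$), with $\mathcal{A}(0)=1$; hence $\mathcal{A}^{\pm1/2}$ is even and real-analytic there.

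First I would make the substitution $\varphi_\lambda(s)=c_0\,\mathcal{A}(s)^{-1/2}\Phi_\lambda(s)=c_0\,(s^{n-1}/A(s))^{1/2}\,\Phi_\lambda(s)$, with $c_0$ the normalising constant enforcing $\varphi_\lambda(0)=1$. Because $\tfrac{A'}{A}=\tfrac{n-1}{s}+\tfrac{\mathcal{A}'}{\mathcal{A}}$, the cross term produced by differentiating $\mathcal{A}^{-1/2}$ cancels the $\tfrac{\mathcal{A}'}{\mathcal{A}}\Phi_\lambda'$ term exactly, leaving a \emph{Bessel equation with a regular potential}:
\[
\Phi_\lambda''(s)+\frac{n-1}{s}\,\Phi_\lambda'(s)+\lambda^2\Phi_\lambda(s)=V(s)\,\Phi_\lambda(s),\qquad
V=\tfrac12\tfrac{\mathcal{A}''}{\mathcal{A}}-\tfrac14\Big(\tfrac{\mathcal{A}'}{\mathcal{A}}\Big)^2+\tfrac{n-1}{2s}\tfrac{\mathcal{A}'}{\mathcal{A}}-\tfrac{Q^2}{4},
\]
with $V$ even and real-analytic on $\{|s|<R_1\}$, say $V(s)=\sum_{j\ge0}v_j s^{2j}$ with $|v_j|$ decaying geometrically at a rate governed by $R_1$. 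Here $\partial_s^2+\tfrac{n-1}{s}\partial_s$ is the Bessel operator of index $\mu:=\tfrac{n-2}{2}$, so its regular solution with the correct Cauchy data is a constant multiple of $\J_\mu(\lambda s)$; this yields the $l=0$ term with $a_0\equiv1$.

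Next I would solve the perturbed equation by a Duhamel/Picard iteration, exploiting the key algebraic fact that the functions $e_j(s):=s^{2j}\J_{\mu+j}(\lambda s)$ form a Jordan-type chain for the Bessel operator: using $\tfrac{d}{dz}\J_\nu(z)=-\tfrac{z}{2\nu+1}\J_{\nu+1}(z)$ and the contiguity relation $z^2\J_{\nu+1}(z)=2\nu(2\nu+1)\J_\nu(z)-(2\nu-1)(2\nu+1)\J_{\nu-1}(z)$ one checks
\[
\Big(\partial_s^2+\frac{n-1}{s}\partial_s+\lambda^2\Big)\big[s^{2j}\J_{\mu+j}(\lambda s)\big]=2j(2\mu+2j-1)\,s^{2(j-1)}\J_{\mu+j-1}(\lambda s),
\]
so $e_0$ lies in the kernel and the Green's operator $\mathcal{G}_\lambda$ of $\partial_s^2+\tfrac{n-1}{s}\partial_s+\lambda^2$ with vanishing Cauchy data sends $e_{j-1}$ to a multiple of $e_j$; moreover $\mathcal{G}_\lambda$ raises the order of vanishing at $0$ by two, so the Neumann series $\Phi_\lambda=\sum_{n\ge0}(\mathcal{G}_\lambda\circ V)^n\Phi^{(0)}$ (with $\Phi^{(0)}$ the homogeneous solution) converges near the origin. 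Reorganising the iterates by the power of $s$ and the Bessel index — the contiguity relation lets one trade a surplus power $s^2$ for a unit increase of the index — one arrives at $\Phi_\lambda(s)=\sum_{l\ge0}a_l(s)\,\J_{\mu+l}(\lambda s)\,s^{2l}$ with each $a_l$ even and real-analytic; equivalently one may solve by Frobenius and regroup the resulting even power series in $s^2$ by its degree in $\lambda^2$. A majorant argument on the iteration, using the geometric decay of $|v_j|$ and the uniform Bessel bounds $|\J_\nu(x)|\lesssim1$ and $|\J_\nu(x)|\le C_\nu|x|^{-\nu-1/2}$ for $|x|>1$, then delivers $|a_l(s)|\le C(4R_1)^{-l}$, absolute convergence of the full series for $0\le s<2$, and — truncating at $l=M$ for $s\le R_0$ with $2<R_0<2R_1$ — the error estimate: the tail after the $l=M$ term is dominated by the $l=M+1$ term, which is $\lesssim s^{2(M+1)}$ when $|\lambda s|\le1$ and $\lesssim s^{2(M+1)}|\lambda s|^{-(\frac{n-1}{2}+M+1)}$ when $|\lambda s|>1$, since $(\mu+M+1)+\tfrac12=\tfrac{n-1}{2}+M+1$.

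The hard part will be this last bookkeeping. One has to control the iterated Bessel convolutions \emph{uniformly in both $\lambda$ and $s$}, so that the index shift $\nu\mapsto\nu+1$, the accumulation of the powers $s^{2l}$, and the polynomial decay in $|\lambda s|$ all propagate through every step with the stated constants; in particular, keeping the $|\lambda s|^{-(\cdots)}$ decay for the \emph{whole} tail and not merely the first omitted term forces a splitting into the regimes $\mu+l\lesssim|\lambda s|$ and $\mu+l\gtrsim|\lambda s|$, using far-zone Bessel asymptotics in the former and the rapid decay below the turning point in the latter. It is precisely the analyticity of $\mathcal{A}^{-1/2}$, hence of $V$, on a fixed disk that makes the majorant estimates — and thus the geometric decay $|a_l(s)|\le C(4R_1)^{-l}$ — go through.
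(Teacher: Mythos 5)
The paper does not prove this lemma: it is quoted verbatim from Astengo's Theorem~3.1 in \cite{A} (the same type of near-origin Bessel expansion goes back to Stanton--Tomas for symmetric spaces). So there is no in-paper proof against which to compare; what can be checked is whether your blind sketch is sound and whether it follows the same ODE route as the cited source, which it does in broad strokes.

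Your skeleton is the right one and the algebra you quote is correct. Writing $A(s)=s^{n-1}\mathcal{A}(s)$ with $\mathcal{A}$ even, analytic and nonvanishing on a disk, the substitution $\varphi_\lambda=c_0\,\mathcal{A}^{-1/2}\Phi_\lambda$ does kill the first-order coefficient except for $\tfrac{n-1}{s}$, leaving $\bigl(\partial_s^2+\tfrac{n-1}{s}\partial_s+\lambda^2\bigr)\Phi_\lambda=V\Phi_\lambda$ with $V$ even and analytic; and the identity
\[
\Bigl(\partial_s^2+\tfrac{n-1}{s}\partial_s+\lambda^2\Bigr)\bigl[s^{2j}\J_{\mu+j}(\lambda s)\bigr]=2j(2\mu+2j-1)\,s^{2(j-1)}\J_{\mu+j-1}(\lambda s),\qquad \mu=\tfrac{n-2}{2},
\]
really does hold: a direct computation plus $\J_\nu'(z)=-\tfrac{z}{2\nu+1}\J_{\nu+1}(z)$ and the contiguity relation makes the $\J_{\mu+j}$ terms cancel exactly, leaving a pure multiple of $\J_{\mu+j-1}$. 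The error exponent $(\mu+M+1)+\tfrac12=\tfrac{n-1}{2}+M+1$ also matches the far-field Bessel decay.

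Two genuine gaps remain in the way you propose to organize the argument. First, a Neumann series $\sum_k(\mathcal G_\lambda\circ V)^k\Phi^{(0)}$ does not land you in the ansatz $\sum_l a_l(s)s^{2l}\J_{\mu+l}(\lambda s)$ with $\lambda$-\emph{independent} $a_l$ without extra work: the term $V(s)\Phi^{(k)}$ multiplies Bessel functions by surplus powers $s^{2j}$ whose removal via the contiguity relation produces $\lambda^{-2}$ factors, and showing these cancel is exactly the ``reorganisation'' you defer. The cleaner (and, I believe, the one actually used in \cite{A} and Stanton--Tomas) route is to plug the ansatz in directly and match coefficients of the linearly independent family $f_l(s)=s^{2l}\J_{\mu+l}(\lambda s)$; with $n-1=2\mu+1$ this yields the closed, $\lambda$-free recursion
\[
2(2\mu+2l+1)\,\frac{d}{ds}\bigl[s^{\,l+1}a_{l+1}(s)\bigr]=s^{\,l}\Bigl(V(s)a_l(s)-a_l''(s)-\tfrac{1-2\mu}{s}\,a_l'(s)\Bigr),\qquad a_0\equiv1,
\]
from which $a_{l+1}$ is obtained by integrating from $0$, and the majorant estimate $|a_l(s)|\le C(4R_1)^{-l}$ follows by a Cauchy-type estimate on this recursion. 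Second, ``the tail is dominated by the $l=M+1$ term'' is not accurate as stated: the constant in $|\J_\nu(x)|\lesssim_\nu|x|^{-\nu-1/2}$ grows in $\nu$, so each tail term must be estimated and the geometric decay of $|a_l|$ together with $s<2<2R_1$ must be invoked to sum the series; one typically keeps the uniform bound $|\J_\nu|\le\J_\nu(0)$ for $|\lambda s|\le1$ and the decaying bound for $|\lambda s|>1$ term by term before summing. Your sketch flags both issues, but they are precisely where the proof lives, so they should be carried out, not deferred.
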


The following result dealing with expansion of Bessel functions in terms of oscillatory terms will also be important for us.
\begin{lemma}\cite[lemma 1]{Prestini} \label{bessel_function_expansion}
Let $\mu \in \frac{1}{2}\N$. Then
there exist positive constants $A_\mu, c_\mu$ such that
\begin{equation*}
J_\mu(s)= \sqrt{\frac{2}{\pi s}} \cos \left( s - \frac{\pi}{2}\mu - \frac{\pi}{4}\right) + \tilde{E}_\mu(s)\:,
\end{equation*}
where
\begin{equation*}
|\tilde{E}_\mu(s)| \le \frac{c_\mu}{s^{3/2}}\:,\:\text{ for } s \ge A_\mu\:.
\end{equation*}
\end{lemma}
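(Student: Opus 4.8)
The plan is to deduce this from the classical large-argument behaviour of Bessel functions, treating separately the two kinds of half-integer order: the set $\frac12\N$ splits into the positive half-odd-integers $\mu \in \{1/2,3/2,5/2,\dots\}$ and the non-negative integers $\mu \in \{0,1,2,\dots\}$, and each type is handled by a standard tool.

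For $\mu = k+\tfrac12$ with $k \in \N\cup\{0\}$ I would use the elementary fact that $J_\mu$ then has a closed form. By Rayleigh's formula,
\begin{equation*}
J_{k+1/2}(s) = \sqrt{\tfrac{2}{\pi}}\, s^{k+1/2}\left(-\tfrac1s\tfrac{d}{ds}\right)^{k} \frac{\sin s}{s} = \sqrt{\tfrac{2}{\pi s}}\,\bigl(p_k(1/s)\sin s + q_k(1/s)\cos s\bigr),
\end{equation*}
with $p_k,q_k$ real polynomials. Carrying out the $k$ differentiations, the only contribution that does not acquire an extra factor $1/s$ beyond the leading $s^{-1/2}$ is the one obtained by always hitting the trigonometric factor; collecting it identifies the ``constant-term'' part of $\sqrt{2/(\pi s)}\,(p_k(1/s)\sin s+q_k(1/s)\cos s)$ with $\sqrt{2/(\pi s)}\cos\!\bigl(s-\tfrac{\pi\mu}{2}-\tfrac\pi4\bigr)$; the phase is checked by comparing $\cos\!\bigl(s-\tfrac{\pi}{2}(k+\tfrac12)-\tfrac\pi4\bigr)=\cos\!\bigl(s-\tfrac{\pi(k+1)}{2}\bigr)$ against $\pm\sin s$ or $\pm\cos s$ according to $k \bmod 4$. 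Every remaining monomial in $p_k(1/s)\sin s+q_k(1/s)\cos s$ carries at least one factor $1/s$, so on $s\ge 1$ the remainder $\tilde E_\mu(s)$ is bounded by $c_\mu s^{-3/2}$, with $c_\mu$ the sum of the moduli of the non-constant coefficients times $\sqrt{2/\pi}$.

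For $\mu$ a non-negative integer there is no such finite expression, so instead I would start from the Poisson representation $J_\mu(s)=\frac{(s/2)^{\mu}}{\Gamma(\mu+1/2)\Gamma(1/2)}\int_{-1}^{1}e^{ist}(1-t^2)^{\mu-1/2}\,dt$ and integrate by parts twice, turning the boundary contributions at $t=\pm1$ into the main term $\sqrt{2/(\pi s)}\cos\!\bigl(s-\tfrac{\pi\mu}{2}-\tfrac\pi4\bigr)$ and leaving a remainder integral of size $O_\mu(s^{-3/2})$; alternatively one may simply quote Watson's asymptotic expansion $J_\mu(s)=\sqrt{2/(\pi s)}\bigl[\cos\omega-\tfrac{4\mu^2-1}{8s}\sin\omega+O_\mu(s^{-2})\bigr]$ with $\omega=s-\tfrac{\pi\mu}{2}-\tfrac\pi4$, together with the standard fact that for real $s$ the error after truncation of such a series is controlled by the first omitted term. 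Either route yields $|\tilde E_\mu(s)|\le c_\mu s^{-3/2}$ for $s\ge A_\mu$; note that this second route is insensitive to the type of $\mu$ and already proves the whole lemma, the half-integer hypothesis being needed only if one wants the fully self-contained argument of the previous paragraph.

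The main obstacle here is bookkeeping rather than anything conceptual: one must verify that the leading term produced by whichever method is exactly $\sqrt{2/(\pi s)}\cos\!\bigl(s-\tfrac{\pi\mu}{2}-\tfrac\pi4\bigr)$ — i.e.\ get the phase shift right across all residue classes of $\mu$ (equivalently of $k$) modulo $4$ — and that the leftover genuinely decays one power of $s$ faster, with a constant $c_\mu$ and threshold $A_\mu$ depending only on $\mu$.
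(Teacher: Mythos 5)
The paper does not prove this lemma at all --- it is imported verbatim with the citation to Prestini's Lemma 1, so there is no in-paper argument against which to compare yours. Taken on its own merits, your proof is correct in substance. The Rayleigh route for $\mu=k+\tfrac{1}{2}$ is solid: in $\bigl(-\tfrac{1}{s}\tfrac{d}{ds}\bigr)^k(\sin s/s)$ the unique $O(s^{-(k+1)})$ contribution comes from always differentiating the trigonometric factor, cycling through $\sin s,\,-\cos s,\,-\sin s,\,\cos s$ with period $4$ in $k$, and after multiplying by $\sqrt{2/\pi}\,s^{k+1/2}$ this is exactly $\sqrt{2/(\pi s)}\cos\bigl(s-\tfrac{\pi(k+1)}{2}\bigr)=\sqrt{2/(\pi s)}\cos\bigl(s-\tfrac{\pi\mu}{2}-\tfrac{\pi}{4}\bigr)$, while every other monomial carries at least one extra power of $1/s$, giving $|\tilde E_\mu(s)|\le c_\mu s^{-3/2}$ for $s\ge1$.

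One caveat on the integer case: the Poisson-representation step does not quite go through as written when $\mu<3/2$. For $\mu=0$ the weight $(1-t^2)^{\mu-1/2}$ is singular at $t=\pm1$, and for $\mu=1$ its derivative is unbounded there, so the second integration by parts produces divergent boundary terms. One would have to substitute $t=\cos\theta$ and run a genuine stationary-phase argument, or first lift $\mu$ using the recurrence $J_{\mu-1}(s)+J_{\mu+1}(s)=\tfrac{2\mu}{s}J_\mu(s)$. But this is moot, because your second alternative --- quoting Watson's (Hankel's) asymptotic expansion together with the standard fact that for real argument the truncation error is bounded by the first omitted term --- is uniform in $\mu\ge0$, proves the whole lemma in one stroke without the parity split, and is the cleanest way to supply the proof the paper leaves to the citation.
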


For the asymptotic behaviour of the spherical functions when the distance from the identity is large, we look at the following series expansion \cite[pp. 735-736]{APV}:
\begin{equation} \label{harish_chandra_expansion}
\varphi_\lambda(s)= 2^{-m_\z/2} {A(s)}^{-1/2} \left\{{\bf c}(\lambda)  \displaystyle\sum_{\mu=0}^\infty \Gamma_\mu(\lambda) e^{(i \lambda-\mu) s} + {\bf c}(-\lambda) \displaystyle\sum_{\mu=0}^\infty \Gamma_\mu(-\lambda) e^{-(i\lambda + \mu) s}\right\}\:.
\end{equation}
The above series converges for $\lambda \in \R$, uniformly on compacts not containing the group identity, where $\Gamma_0 \equiv 1$ and for $\mu \in \N$, one has the recursion formula,
\begin{equation*}
(\mu^2-2i\mu\lambda) \Gamma_\mu(\lambda) = \displaystyle\sum_{j=0}^{\mu -1}\omega_{\mu -j}\Gamma_{j}(\lambda)\:.
\end{equation*}
Then one has the following estimate on the coefficients \cite[Lemma 1]{APV}, for constants $C>0, d \ge 0$:
\begin{equation} \label{coefficient_estimate}
\left|\Gamma_\mu(\lambda)\right| \le C \mu^d {\left(1+|\lambda|\right)}^{-1}\:,
\end{equation}
for all $\lambda \in \R, \mu \in \N$.

\section{Estimate of an Oscillatory integral}
In this section, we prove an estimate of an oscillatory integral which will be crucial for the proof of Theorem \ref{maximal_bddness_thm}.

\medskip

Let $B(n)= \max\left\{R_0,A_{\frac{n-2}{2}}\right\}$, where $R_0$ and $A_{\frac{n-2}{2}}$ are as in Lemmas \ref{bessel_series_expansion} and \ref{bessel_function_expansion} respectively. Then we note that, in particular  $B(n) >2$. We now present the result:
\begin{lemma}
\label{oscillatory_integral_estimate}
Let $t: (0,\infty) \to (0,4/Q^2)$ be a measurable function and $R>0$. Then for $s,s' \in (0,R]$,
\begin{equation} \label{oscillatory_integral_inequality}
\left|\bigintssss_{\max\left\{\frac{B(n)}{s},\frac{B(n)}{s'}\right\}}^\infty \frac{e^{i\left\{\lambda(s-s')+(t(s)-t(s'))\left(\lambda^2 + \frac{Q^2}{4}\right)\right\}}}{{\left(\lambda^2 + \frac{Q^2}{4}\right)}^{1/4}} d\lambda \right| \le \frac{c}{{|s-s'|}^{1/2}}\:,
\end{equation}
where $c$ is a positive constant depending only on $m_\mv,m_\z$ and $R$.
\end{lemma}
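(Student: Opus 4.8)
The plan is to reduce the estimate to a classical van der Corput/stationary-phase bound for an oscillatory integral with a quadratic-type phase. Write $\tau = t(s)-t(s')$, $\delta = s-s'$, and set the phase $\psi(\lambda) = \lambda\delta + \tau(\lambda^2 + Q^2/4)$. After the substitution $\xi = (\lambda^2 + Q^2/4)^{1/2}$, or more simply by working directly in $\lambda$ on the region $\lambda \ge \max\{B(n)/s, B(n)/s'\} =: \Lambda_0 > 0$, the amplitude $(\lambda^2+Q^2/4)^{-1/4}$ is smooth, positive, decreasing, and comparable to $\lambda^{-1/2}$ there, with $|\tfrac{d}{d\lambda}(\lambda^2+Q^2/4)^{-1/4}| \lesssim \lambda^{-3/2}$. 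So by an integration by parts / summation argument it suffices to control, uniformly in the endpoints, integrals of the form $\int_a^b e^{i\psi(\lambda)}\, d\lambda$.

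The key step is the dichotomy based on the size of $|\tau|$ relative to $|\delta|$. First I would dispose of the trivial range: since $0 < t(s), t(s') < 4/Q^2$ we have $|\tau| < 4/Q^2$, and the amplitude is integrable-at-infinity like $\lambda^{-1/2}$ only up to... well, it is not globally integrable, so genuine oscillation is always needed; but the cheap bound $\int_{\Lambda_0}^{\Lambda_0 + |\delta|^{-1}} \lambda^{-1/2}\,d\lambda$ together with $\Lambda_0 \ge B(n)/R \gtrsim 1$ already gives the desired $|\delta|^{-1/2}$ from the portion $\lambda \lesssim |\delta|^{-1}$ (using $\lambda^{-1/2} \le \Lambda_0^{-1/2} \lesssim_R 1$ and length $|\delta|^{-1}$, so contribution $\lesssim_R |\delta|^{-1/2}$). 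For the tail $\lambda \gtrsim |\delta|^{-1}$ one must use oscillation: $\psi'(\lambda) = \delta + 2\tau\lambda$. If $|\tau|$ is small, say $|\tau| \le |\delta|/(4\Lambda_0)$ — here I would instead split on whether $\lambda \le |\delta|/(4|\tau|)$ — then on the relevant sub-range $|\psi'(\lambda)| \gtrsim |\delta|$ is monotone, and van der Corput's first-derivative lemma applied to $\int e^{i\psi}\, d\lambda$ with the monotone amplitude $\lambda^{-1/2}$ gives a bound $\lesssim |\delta|^{-1}\cdot \Lambda_0^{-1/2} \lesssim_R |\delta|^{-1/2}$ (using again $\Lambda_0 \gtrsim 1$ and $|\delta| \le R$). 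When $\lambda$ is large compared to $|\delta|/|\tau|$, the quadratic term dominates and $\psi''(\lambda) = 2\tau$ is a fixed nonzero constant, so van der Corput's second-derivative lemma gives $\int |e^{i\psi}|$-type control $\lesssim |\tau|^{-1/2}$ times the amplitude at the left endpoint of that range, which is $\lesssim (|\delta|/|\tau|)^{-1/2} = (|\tau|/|\delta|)^{1/2}$; hence the product is $\lesssim |\delta|^{-1/2}$, as required. The stationary point $\lambda_* = -\delta/(2\tau)$ lies in $(0,\infty)$ only when $\delta$ and $\tau$ have opposite signs, and it is exactly at the boundary $\lambda \sim |\delta|/|\tau|$ between the two regimes, so the second-derivative estimate absorbs it.

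The main obstacle will be bookkeeping the endpoints carefully: the lower limit $\Lambda_0$ depends on $s, s'$ and could be as large as we like (it is $\ge B(n)/\min\{s,s'\}$), so one cannot simply say "the region $\lambda \lesssim |\delta|^{-1}$ is nonempty." If $\Lambda_0 \gg |\delta|^{-1}$, then the low-frequency part is empty and \emph{everything} is in the oscillatory tail; but then $|\psi'(\lambda)| = |\delta + 2\tau\lambda|$ must still be bounded below or $\psi''$ used, and one needs $|\delta| \le R$ and $\Lambda_0 \gtrsim_R 1$ to convert the resulting $|\delta|^{-1}$ or $|\tau|^{-1/2}\Lambda_0^{-1/2}$ bounds into $|\delta|^{-1/2}$ — here the constraint $|\tau| < 4/Q^2$ and $s,s' \le R$ are what make the constant depend only on $m_\mv, m_\z, R$. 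I would organize the argument as: (i) van der Corput lemmas with monotone amplitudes (standard, cite or include a one-line proof via integration by parts); (ii) split $[\Lambda_0,\infty)$ at $\lambda = \max\{\Lambda_0, |\delta|/|\tau|\}$ (interpreting this as $\infty$ if $\tau = 0$); (iii) on the lower piece use the first-derivative lemma with $|\psi'| \ge |\delta|$ (valid since there $2|\tau|\lambda \le |\delta|$ so $\psi'$ does not vanish and is comparable to $\delta$ — wait, one needs $|\delta + 2\tau\lambda|\ge |\delta| - 2|\tau|\lambda \ge |\delta|/2$... no: on $\lambda \le |\delta|/(2|\tau|)$ we only get $|\psi'|\ge 0$; better to cut at $|\delta|/(4|\tau|)$ so $|\psi'|\ge |\delta|/2$), then combine with amplitude $\le \Lambda_0^{-1/2}$; (iv) on the upper piece use the second-derivative lemma with $|\psi''| = 2|\tau|$ and amplitude decreasing from $(|\delta|/|\tau|)^{-1/2}$; (v) add up. The case $\tau = 0$ is a pure first-derivative estimate with $\psi' = \delta$ and is immediate. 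No deep ideas are needed beyond this; the content is entirely in the uniformity of the constants, which the hypotheses $s, s' \in (0,R]$ and $t$ valued in $(0, 4/Q^2)$ are tailored to provide.
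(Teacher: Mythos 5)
The paper's own proof of this lemma consists of a single normalization: setting $\beta = Q/2$, changing variables $r=\lambda/\beta$, and absorbing $\beta^2$ into a rescaled $(0,1)$-valued time function $\tilde t$, it rewrites the integral as $\sqrt\beta\,\bigl|\int \frac{e^{i\{r(s-s')+(\tilde t(s)-\tilde t(s'))r^2\}}}{(1+r^2)^{1/4}}\,dr\bigr|$ and then simply says that the result follows as in Lemma~2 of Prestini \cite{Prestini}. Prestini's Lemma~2 is proved by the very van der Corput/stationary-phase machinery you are proposing, so at the level of mathematical content your route is the one the paper implicitly invokes; the paper just saves space by offloading the oscillatory estimate to the Euclidean reference.

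That said, your "organized version (ii)--(v)" has a concrete gap in step (iii). On the lower piece $[\Lambda_0,\max\{\Lambda_0,|\delta|/(4|\tau|)\}]$ you combine the first-derivative van der Corput bound (with $|\psi'|\ge|\delta|/2$) with "amplitude $\le\Lambda_0^{-1/2}$", giving $\lesssim\Lambda_0^{-1/2}/|\delta|$. This is $\lesssim|\delta|^{-1/2}$ only when $\Lambda_0\gtrsim|\delta|^{-1}$; but $\Lambda_0=B(n)/\min\{s,s'\}$ can be of order $B(n)/R$ while $|\delta|=|s-s'|$ is arbitrarily small, so $\Lambda_0^{-1/2}/|\delta|$ can be much larger than $|\delta|^{-1/2}$. (Your "main obstacle" paragraph worries about the regime $\Lambda_0\gg|\delta|^{-1}$, which is actually harmless since there $\Lambda_0^{-1/2}\lesssim|\delta|^{1/2}$; the dangerous regime is $\Lambda_0\lesssim|\delta|^{-1}$.) The fix is the preliminary low-frequency cut you describe earlier in the paragraph but drop from the final list: first bound $\int_{\Lambda_0}^{\Lambda_0+|\delta|^{-1}}(\lambda^2+Q^2/4)^{-1/4}\,d\lambda\le\int_0^{|\delta|^{-1}}\lambda^{-1/2}\,d\lambda=2|\delta|^{-1/2}$, and then run (ii)--(v) on $[\Lambda_0+|\delta|^{-1},\infty)$, where the amplitude at the new left endpoint is $\lesssim|\delta|^{1/2}$ rather than $\Lambda_0^{-1/2}$. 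With that one adjustment both (iii) and (iv) do produce $|\delta|^{-1/2}$ as claimed, and the constants depend only on $R$ and on the bound $|\tau|<4/Q^2$, as required.
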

\begin{proof}
Let $I$ denote the left hand side of the inequality (\ref{oscillatory_integral_inequality}). Let $\beta = Q/2$. Then we note that
\begin{equation*}
I= \frac{1}{\sqrt{\beta}} \left|\bigintssss_{\max\left\{\frac{B(n)}{s},\frac{B(n)}{s'}\right\}}^\infty \frac{e^{i\left\{\lambda(s-s')+(t(s)-t(s'))\lambda^2 \right\}}}{{\left(1+ {\left(\frac{\lambda}{\beta}\right)}^2 \right)}^{1/4}} d\lambda \right|\:.
\end{equation*}
Now by the change of variable, $r=\lambda/\beta$, one further gets that
\begin{equation*}
I = \sqrt{\beta} \left|\bigintssss_{\max\left\{\frac{B(n)}{\beta s},\frac{B(n)}{\beta s'}\right\}}^\infty \frac{e^{i\left\{r(\beta s-\beta s')+(t(s)-t(s'))\beta^2r^2 \right\}}}{{\left(1+ r^2 \right)}^{1/4}} dr \right|\:.
\end{equation*}
Now define the $(0,1)$-valued measurable function $\tilde{t}$ on $(0,\infty)$ by
\begin{equation*}
\tilde{t}(u):= \beta^2\: t(u/\beta)\:.
\end{equation*}
Then one rewrites $I$ as
\begin{equation*}
I= \sqrt{\beta} \left|\bigintssss_{\max\left\{\frac{B(n)}{\beta s},\frac{B(n)}{\beta s'}\right\}}^\infty \frac{e^{i\left\{r(\beta s-\beta s')+(\tilde{t}(\beta s)-\tilde{t}(\beta s'))r^2 \right\}}}{{\left(1+ r^2 \right)}^{1/4}} dr \right|\:.
\end{equation*}
Finally substituting $\beta s$ and $\beta s'$ by $s$ and $s'$ respectively, $I$ takes the form
\begin{equation*}
I= \sqrt{\beta} \left|\bigintssss_{\max\left\{\frac{B(n)}{s},\frac{B(n)}{s'}\right\}}^\infty \frac{e^{i\left\{r(s-s')+(\tilde{t}(s)-\tilde{t}(s'))r^2 \right\}}}{{\left(1+ r^2 \right)}^{1/4}} dr \right|\:,
\end{equation*}
where $s$ and $s'$ belong to the interval $(0,\beta R]$. The result now follows by proceeding as in the proof of Lemma 2 of \cite{Prestini}.
\end{proof}

\section{Proof of Theorem \ref{maximal_bddness_thm}
}
We begin the proof by linearizing the maximal function. Then writing $t$ as a $(0,4/Q^2)$-valued radial measurable function of $x$, we get
\begin{equation*}
Tf(s):=S_{t(s)}f(s)= \int_{0}^\infty \varphi_\lambda(s)\:e^{it(s)\left(\lambda^2 + \frac{Q^2}{4}\right)}\:\hat{f}(\lambda)\: {|{\bf c}(\lambda)|}^{-2}\: d\lambda\:.
\end{equation*}
Then it suffices to prove for any $R>0$, the following estimate at the endpoint $\alpha=1/4$,
\begin{equation} \label{thm_pf_eq1}
\int_0^R |Tf(s)|\: A(s)\: ds \le c\: {\|f\|}_{H^{1/4}(S)}\:,
\end{equation}
for some positive constant $c$, depending only on $m_\mv,m_\z$ and $R$.

\medskip

We first decompose $T$ as follows,
\begin{eqnarray*}
Tf(s) &=& \int_{0}^{B(n)/s} \varphi_\lambda(s)\:e^{it(s)\left(\lambda^2 + \frac{Q^2}{4}\right)}\:\hat{f}(\lambda)\: {|{\bf c}(\lambda)|}^{-2}\: d\lambda \\
& + & \int_{B(n)/s}^{\infty} \varphi_\lambda(s)\:e^{it(s)\left(\lambda^2 + \frac{Q^2}{4}\right)}\:\hat{f}(\lambda)\: {|{\bf c}(\lambda)|}^{-2}\: d\lambda \\
&=& T_1f(s)+T_2f(s)\:.
\end{eqnarray*}
In the above, $B(n)$ is as in section $3$.

\subsection{Estimating $T_1$:} In estimating $T_1$, we will only use (\ref{phi_lambda_bound}), that is, boundedness of $\varphi_\lambda$. By an application of the Cauchy-Schwarz inequality, we get that
\begin{eqnarray} \label{thm_pf_eq2}
&& |T_1f(s)| \nonumber\\
&\le &   \int_0^{B(n)/s} \left|\hat{f}(\lambda)\right| {|{\bf c}(\lambda)|}^{-2} d\lambda \nonumber\\
& \le &  {\left(\bigintssss_0^\infty {\left(\lambda^2 + \frac{Q^2}{4}\right)}^{1/4} {\left|\hat{f}(\lambda)\right|}^2 {|{\bf c}(\lambda)|}^{-2} d\lambda \right)}^{1/2}  {\left(\bigintssss_0^{B(n)/s} \frac{{|{\bf c}(\lambda)|}^{-2} d\lambda}{{\left(\lambda^2 + \frac{Q^2}{4}\right)}^{1/4}}\right)}^{1/2} \nonumber\\
&=&  {\|f\|}_{H^{1/4}(S)} {\left(\bigintssss_0^{B(n)/s} \frac{{|{\bf c}(\lambda)|}^{-2} d\lambda}{{\left(\lambda^2 + \frac{Q^2}{4}\right)}^{1/4}}\right)}^{1/2}\:.
\end{eqnarray}
Now by the estimate of ${|{\bf c}(\lambda)|}^{-2}$ (see \ref{plancherel_measure}), one has
for some constant $c$ depending only on $m_\mv$ and $m_\z$,
\begin{equation} \label{thm_pf_eq3}
\bigintssss_0^{B(n)/s} \frac{{|{\bf c}(\lambda)|}^{-2} d\lambda}{{\left(\lambda^2 + \frac{Q^2}{4}\right)}^{1/4}} \le c \left(1+\frac{1}{s^n}\right)\:.
\end{equation}
Then plugging (\ref{thm_pf_eq3}) in (\ref{thm_pf_eq2}) and using the expression of the density function (\ref{density_function}), we get for some constant $c$ depending only on $m_\mv,m_\z$ and $R$, that
\begin{eqnarray*}
\int_0^R \left|T_1f(s)\right| A(s) ds & \le & c\: {\|f\|}_{H^{1/4}(S)} \bigintssss_0^R {\left(1+\frac{1}{s^n}\right)}^{1/2} A(s) ds \\
& \le & c\: {\|f\|}_{H^{1/4}(S)} \left\{\int_0^1 s^{\frac{n}{2}-1}\: ds + \int_1^R e^{Qs}\: ds\right\} \\
& \le & c\: {\|f\|}_{H^{1/4}(S)}\:.
\end{eqnarray*}
Thus (\ref{thm_pf_eq1}) is true for $T_1$.

\subsection{Estimating $T_2$:} The first step of proving (\ref{thm_pf_eq1}) for $T_2$ is to decompose the integral into two parts depending on the geodesic distance from the identity (that is the values of $s$) and then invoke the series expansions of $\varphi_\lambda$. More precisely, we write
\begin{equation*}
\int_0^R \left|T_2f(s)\right| A(s) ds = \int_0^{R_0} \left|T_2f(s)\right| A(s) ds + \int_{R_0}^R \left|T_2f(s)\right| A(s) ds = I_1+I_2\:.
\end{equation*}
In the above, $R_0$ is as in Lemma \ref{bessel_series_expansion}.
\subsubsection{Estimating $I_1$:} The strategy to estimate $I_1$ is to further decompose $T_2$ into three parts. Two of them will be taken care of by estimating the oscillation, while the third one will follow by estimates on the error terms.

\medskip

We expand $\varphi_\lambda$ in the Bessel series. For $0 \le s \le R_0$, putting $M=0$ in Lemma \ref{bessel_series_expansion}, we get for $\lambda \ge B(n)/s$,
\begin{equation} \label{thm_pf_eq4}
\varphi_\lambda(s)= c_0 {\left(\frac{s^{n-1}}{A(s)}\right)}^{1/2} \J_{\frac{n-2}{2}}(\lambda s) + E_1(\lambda,s)\:,
\end{equation}
where
\begin{equation*}
\left|E_1(\lambda,s)\right| \le cs^2 {(\lambda s)}^{-\frac{n+1}{2}}\:.
\end{equation*}
Now by definition,
\begin{equation} \label{thm_pf_eq5}
\J_{\frac{n-2}{2}}(\lambda s)= 2^{\frac{n-2}{2}} \: \pi^{1/2} \: \Gamma\left(\frac{n-1}{2}\right) \frac{J_{\frac{n-2}{2}}(\lambda s)}{(\lambda s)^{\left(\frac{n-2}{2}\right)}}\:.
\end{equation}
Moreover, by Lemma \ref{bessel_function_expansion},
\begin{eqnarray} \label{thm_pf_eq6}
J_{\frac{n-2}{2}}(\lambda s) &=& \sqrt{\frac{2}{\pi}} \frac{1}{{(\lambda s)}^{1/2}} \cos \left(\lambda s - \frac{\pi}{2}\left(\frac{n-2}{2}\right) - \frac{\pi}{4}\right) + \tilde{E}_{\frac{n-2}{2}}(\lambda s) \nonumber \\
&=& \sqrt{\frac{1}{2\pi}} \frac{1}{{(\lambda s)}^{1/2}} \left\{e^{i\left(\lambda s - \frac{\pi}{4}(n-1)\right)} + e^{-i\left(\lambda s - \frac{\pi}{4}(n-1)\right)}\right\} + \tilde{E}_{\frac{n-2}{2}}(\lambda s)\:,
\end{eqnarray}
where
\begin{equation*}
\left|\tilde{E}_{\frac{n-2}{2}}(\lambda s)\right| \le \frac{c}{{(\lambda s)}^{3/2}}\:.
\end{equation*}
Then pluggining (\ref{thm_pf_eq6}) and (\ref{thm_pf_eq5}) in (\ref{thm_pf_eq4}), we get for some $c>0$,
\begin{equation} \label{thm_pf_eq7}
\varphi_\lambda(s)= c {\left(\frac{s^{n-1}}{A(s)}\right)}^{1/2} \frac{1}{{(\lambda s)}^{\frac{n-1}{2}}} \left\{e^{i\left(\lambda s - \frac{\pi}{4}(n-1)\right)} + e^{-i\left(\lambda s - \frac{\pi}{4}(n-1)\right)}\right\} + \mathscr{E}_1(\lambda,s)\:,
\end{equation}
where
\begin{equation*}
\mathscr{E}_1(\lambda,s) = E_1(\lambda,s) + c {\left(\frac{s^{n-1}}{A(s)}\right)}^{1/2} \:\frac{\tilde{E}_{\frac{n-2}{2}}(\lambda s)}{(\lambda s)^{\left(\frac{n-2}{2}\right)}}\:,
\end{equation*}
and thus
\begin{eqnarray} \label{thm_pf_eq8}
\left|\mathscr{E}_1(\lambda,s)\right| & \le & c \left\{s^2 {(\lambda s)}^{-\frac{n+1}{2}} + {(\lambda s)}^{-\frac{n+1}{2}} \right\} \nonumber \\
& \le & c \:{(\lambda s)}^{-\frac{n+1}{2}}\:.
\end{eqnarray}

Thus for $0 \le s \le R_0$ and $\lambda \ge B(n)/s$, using (\ref{thm_pf_eq7}), we decompose $T_2$ as,
\begin{eqnarray*}
&& T_2f(s)\\
&=& c \bigintssss_{B(n)/s}^\infty {\left(\frac{s^{n-1}}{A(s)}\right)}^{1/2} \frac{1}{{(\lambda s)}^{\frac{n-1}{2}}} \: e^{i\left(\lambda s - \frac{\pi}{4}(n-1)\right)}  \:e^{it(s)\left(\lambda^2 + \frac{Q^2}{4}\right)}\:\hat{f}(\lambda)\: {|{\bf c}(\lambda)|}^{-2}\: d\lambda \\
&+& c \bigintssss_{B(n)/s}^\infty {\left(\frac{s^{n-1}}{A(s)}\right)}^{1/2} \frac{1}{{(\lambda s)}^{\frac{n-1}{2}}} \: e^{-i\left(\lambda s - \frac{\pi}{4}(n-1)\right)}  \:e^{it(s)\left(\lambda^2 + \frac{Q^2}{4}\right)}\:\hat{f}(\lambda)\: {|{\bf c}(\lambda)|}^{-2}\: d\lambda \\
&+& \bigintssss_{B(n)/s}^\infty \mathscr{E}_1(\lambda,s) \:e^{it(s)\left(\lambda^2 + \frac{Q^2}{4}\right)}\:\hat{f}(\lambda)\: {|{\bf c}(\lambda)|}^{-2}\: d\lambda \\
&=& T_3f(s)\:+\: T_4f(s)\: + \:T_5f(s)\:.
\end{eqnarray*}

Estimates of $T_3$ and $T_4$ being similar, we give the details only for $T_3$. We note that there exists $\theta$ such that for all $0 < s \le R_0$, $|\theta (s)| =1$ and
\begin{eqnarray*}
&&|T_3f(s)| \\
&=&\theta(s)T_3f(s) \\
& =&c\:\theta(s) \bigintssss_{B(n)/s}^\infty {\left(\frac{s^{n-1}}{A(s)}\right)}^{1/2} \frac{1}{{(\lambda s)}^{\frac{n-1}{2}}} \: e^{i\left(\lambda s - \frac{\pi}{4}(n-1)\right)}  \:e^{it(s)\left(\lambda^2 + \frac{Q^2}{4}\right)}\:\hat{f}(\lambda)\: {|{\bf c}(\lambda)|}^{-2}\: d\lambda\:.
\end{eqnarray*}
Thus by Fubini's theorem and the Cauchy-Schwarz inequality,
\begin{eqnarray*}
&& \int_{0}^{R_0} |T_3f(s)|\:A(s)\:ds \\
&=& c \int_0^{R_0}\theta(s) \bigintssss_{B(n)/s}^\infty \lambda^{-\frac{n-1}{2}} e^{i\left(\lambda s - \frac{\pi}{4}(n-1)\right)}e^{it(s)\left(\lambda^2 + \frac{Q^2}{4}\right)}\hat{f}(\lambda){|{\bf c}(\lambda)|}^{-2} {A(s)}^{1/2}\: d\lambda \: ds \\
&=& c \int_{B(n)/R_0}^\infty \hat{f}(\lambda)\lambda^{-\frac{n-1}{2}} \left(\int_{B(n)/\lambda}^{R_0} \theta(s) {A(s)}^{1/2}  e^{i\left(\lambda s - \frac{\pi}{4}(n-1) + t(s)\left(\lambda^2 + \frac{Q^2}{4}\right)\right)} ds \right) {|{\bf c}(\lambda)|}^{-2} d\lambda \\
& \le & c \int_{B(n)/R_0}^\infty \left|\hat{f}(\lambda)\right|\lambda^{-\frac{n-1}{2}} \left|\int_{B(n)/\lambda}^{R_0} \theta(s) {A(s)}^{1/2}  e^{i\left(\lambda s + t(s)\left(\lambda^2 + \frac{Q^2}{4}\right)\right)} ds \right| {|{\bf c}(\lambda)|}^{-2} d\lambda \\
& \le &  {\left(\bigintssss_0^\infty {\left(\lambda^2 + \frac{Q^2}{4}\right)}^{1/4} {\left|\hat{f}(\lambda)\right|}^2 {|{\bf c}(\lambda)|}^{-2} d\lambda \right)}^{1/2} \\
&& \times {\left(\int_{B(n)/R_0}^\infty
\frac{{|{\bf c}(\lambda)|}^{-2}}{{\left(\lambda^2 + \frac{Q^2}{4}\right)}^{1/4} \lambda^{n-1}}
{\left|\bigintssss_{B(n)/\lambda}^{R_0} \theta(s) {A(s)}^{1/2}  e^{i\left(\lambda s + t(s)\left(\lambda^2 + \frac{Q^2}{4}\right)\right)} ds \right|}^2  d\lambda\right)}^{1/2} \\
&=& \: {\|f\|}_{H^{1/4}(S)} {\left(\int_{B(n)/R_0}^\infty G_1(\lambda) d\lambda\right)}^{1/2}\:.
\end{eqnarray*}
Now by the estimate of ${|{\bf c}(\lambda)|}^{-2}$ (see \ref{plancherel_measure}), Fubini's theorem and Lemma \ref{oscillatory_integral_estimate}, one has
\begin{eqnarray*}
&& \int_{B(n)/R_0}^\infty G_1(\lambda) d\lambda \\
&=& \int_{B(n)/R_0}^\infty
\frac{{|{\bf c}(\lambda)|}^{-2}}{{\left(\lambda^2 + \frac{Q^2}{4}\right)}^{1/4} \lambda^{n-1}}
{\left|\bigintssss_{B(n)/\lambda}^{R_0} \theta(s) {A(s)}^{1/2}  e^{i\left(\lambda s + t(s)\left(\lambda^2 + \frac{Q^2}{4}\right)\right)} ds \right|}^2  d\lambda \\
& \le & c \int_{B(n)/R_0}^\infty
\frac{1}{{\left(\lambda^2 + \frac{Q^2}{4}\right)}^{1/4}}
{\left|\bigintssss_{B(n)/\lambda}^{R_0} \theta(s) {A(s)}^{1/2}  e^{i\left(\lambda s + t(s)\left(\lambda^2 + \frac{Q^2}{4}\right)\right)} ds \right|}^2  d\lambda \\
&=& c \int_{B(n)/R_0}^\infty
\frac{1}{{\left(\lambda^2 + \frac{Q^2}{4}\right)}^{1/4}} \\
&& \times
\bigintssss_{B(n)/\lambda}^{R_0} \bigintssss_{B(n)/\lambda}^{R_0} \theta(s) \overline{\theta(s')} {\left(A(s)A(s')\right)}^{1/2}  e^{i\left\{\lambda (s-s') + (t(s)-t(s'))\left(\lambda^2 + \frac{Q^2}{4}\right)\right\}} ds ds'   d\lambda \\
&=& \int_0^{R_0} \theta(s) {A(s)}^{1/2} \int_0^{R_0} \overline{\theta(s')} {A(s')}^{1/2} \\
&& \times \left(\bigintssss_{\max\left\{\frac{B(n)}{s},\frac{B(n)}{s'}\right\}}^\infty \frac{e^{i\left\{\lambda(s-s')+(t(s)-t(s'))\left(\lambda^2 + \frac{Q^2}{4}\right)\right\}}}{{\left(\lambda^2 + \frac{Q^2}{4}\right)}^{1/4}} d\lambda\right) ds' ds \\
& \le & c \int_0^{R_0} {A(s)}^{1/2} \int_0^{R_0}  \frac{{A(s')}^{1/2}}{{|s-s'|}^{1/2}} \: ds' ds \\
& < & +\infty\:.
\end{eqnarray*}

Therefore,
\begin{equation} \label{thm_pf_eq9}
\int_{0}^{R_0} |T_3f(s)|\:A(s)\:ds \le c\: {\|f\|}_{H^{1/4}(S)}\:.
\end{equation}
Similarly,
\begin{equation} \label{thm_pf_eq10}
\int_{0}^{R_0} |T_4f(s)|\:A(s)\:ds \le c\: {\|f\|}_{H^{1/4}(S)}\:.
\end{equation}

We now estimate $T_5$. In this case, we ignore the oscillation and only use the estimate (\ref{thm_pf_eq8}) of the error term $\mathscr{E}_1$ . Combining the aforementioned estimate, the Cauchy-Schwarz inequality and the estimate of ${|{\bf c}(\lambda)|}^{-2}$ (\ref{plancherel_measure}), one has
\begin{eqnarray*}
|T_5f(s)| &\le & c \bigintssss_{B(n)/s}^\infty {(\lambda s)}^{-\frac{n+1}{2}} \:\left|\hat{f}(\lambda)\right|\: {|{\bf c}(\lambda)|}^{-2}\: d\lambda \\
& = & \frac{c}{s^{\frac{n+1}{2}}} \bigintssss_{B(n)/s}^\infty \lambda ^{-\frac{n+1}{2}} \:\left|\hat{f}(\lambda)\right|\: {|{\bf c}(\lambda)|}^{-2}\: d\lambda \\
& \le & \frac{c}{s^{\frac{n+1}{2}}} {\left(\int_0^\infty {\left(\lambda^2 + \frac{Q^2}{4}\right)}^{1/4} {\left|\hat{f}(\lambda)\right|}^2 {|{\bf c}(\lambda)|}^{-2}\: d\lambda \right)}^{1/2} \\
&& \times {\left(\int_{B(n)/s}^\infty \frac{\lambda^{-(n+1)}}{{\left(\lambda^2 + \frac{Q^2}{4}\right)}^{1/4}} {|{\bf c}(\lambda)|}^{-2}\: d\lambda \right)}^{1/2} \\
& \le & \frac{c}{s^{\frac{n+1}{2}}} {\|f\|}_{H^{1/4}(S)} {\left(\int_{B(n)/s}^\infty  \frac{d\lambda}{\lambda^2{\left(\lambda^2 + \frac{Q^2}{4}\right)}^{1/4}} \right)}^{1/2} \\
& \le & \frac{c}{s^{\frac{n+1}{2}}} {\|f\|}_{H^{1/4}(S)} {\left(\int_{B(n)/s}^\infty  \lambda^{-\frac{5}{2}} d\lambda \right)}^{1/2} \\
& = & \frac{c}{s^{\left(\frac{2n-1}{4}\right)}} {\|f\|}_{H^{1/4}(S)} \:.
\end{eqnarray*}
Hence,
\begin{eqnarray} \label{thm_pf_eq11}
\int_{0}^{R_0} |T_5f(s)|\:A(s)\:ds &\le & c\: {\|f\|}_{H^{1/4}(S)} \int_{0}^{R_0} s^{\left(\frac{2n-3}{4}\right)} \: ds \nonumber \\
&\le & c\: {\|f\|}_{H^{1/4}(S)} \:.
\end{eqnarray}
Thus combining (\ref{thm_pf_eq9})-(\ref{thm_pf_eq11}), we get that
\begin{equation*}
I_1 \le c\: {\|f\|}_{H^{1/4}(S)} \:.
\end{equation*}
\subsubsection{Estimating $I_2$:} The strategy to estimate $I_2$ is essentially the same as that of $I_1$. This time around one uses the  series expansion (\ref{harish_chandra_expansion}) of $\varphi_\lambda$.

\medskip

For $R_0 \le s \le R$ and $\lambda \ge B(n)/s$, using the series expansion  (\ref{harish_chandra_expansion}) and the estimate (\ref{coefficient_estimate}) on the coefficients $\Gamma_\mu$, we get
\begin{equation} \label{thm_pf_eq12}
\varphi_\lambda (s) = 2^{-m_\z/2} {A(s)}^{-1/2} \left\{{\bf c}(\lambda)  e^{i \lambda s} + {\bf c}(-\lambda) e^{-i\lambda  s}\right\} + \mathscr{E}_2(\lambda,s)\:,
\end{equation}
where
\begin{equation*}
\mathscr{E}_2(\lambda,s) = 2^{-m_\z/2} {A(s)}^{-1/2} \left\{{\bf c}(\lambda)  \displaystyle\sum_{\mu=1}^\infty \Gamma_\mu(\lambda) e^{(i \lambda-\mu) s} + {\bf c}(-\lambda) \displaystyle\sum_{\mu=1}^\infty \Gamma_\mu(-\lambda) e^{-(i\lambda + \mu) s}\right\} \:,
\end{equation*}
and thus
\begin{equation} \label{thm_pf_eq13}
\left|\mathscr{E}_2(\lambda,s)\right| \le c \: {A(s)}^{-1/2} \left|{\bf c}(\lambda)\right| {(1+|\lambda|)}^{-1} \:.
\end{equation}

Hence for $R_0 \le s \le R$ and $\lambda \ge B(n)/s$, in accordance to (\ref{thm_pf_eq12}), we decompose $T_2$ as,
\begin{eqnarray*}
T_2f(s)&=& 2^{-m_\z/2} {A(s)}^{-1/2} \bigintssss_{B(n)/s}^\infty  {\bf c}(\lambda) \: e^{i\left\{\lambda s + t(s)\left(\lambda^2 + \frac{Q^2}{4}\right)\right\}} \:\hat{f}(\lambda)\: {|{\bf c}(\lambda)|}^{-2}\: d\lambda \\
&+& 2^{-m_\z/2} {A(s)}^{-1/2} \bigintssss_{B(n)/s}^\infty  {\bf c}(-\lambda) \: e^{i\left\{-\lambda s + t(s)\left(\lambda^2 + \frac{Q^2}{4}\right)\right\}} \:\hat{f}(\lambda)\: {|{\bf c}(\lambda)|}^{-2}\: d\lambda \\
&+& \bigintssss_{B(n)/s}^\infty \mathscr{E}_2(\lambda,s) \:e^{it(s)\left(\lambda^2 + \frac{Q^2}{4}\right)}\:\hat{f}(\lambda)\: {|{\bf c}(\lambda)|}^{-2}\: d\lambda \\
&=& T_6f(s)\:+\: T_7f(s)\: + \:T_8f(s)\:.
\end{eqnarray*}

Estimates of $T_6$ and $T_7$ being similar, we give the details only for $T_6$. There exists $\psi$ such that for all $R_0 \le s \le R$, $|\psi (s)| =1$ and
\begin{eqnarray*}
&&|T_6f(s)| \\
&=&\psi(s)\: T_6f(s) \\
& =& 2^{-m_\z/2} \:\psi(s) {A(s)}^{-1/2} \bigintssss_{B(n)/s}^\infty  {\bf c}(\lambda) \: e^{i\left\{\lambda s + t(s)\left(\lambda^2 + \frac{Q^2}{4}\right)\right\}} \:\hat{f}(\lambda)\: {|{\bf c}(\lambda)|}^{-2}\: d\lambda\:.
\end{eqnarray*}
Hence by Fubini's theorem,
\begin{eqnarray*}
&& \int_{R_0}^R \left|T_6f(s)\right| \: A(s)\: ds \\
&=& 2^{-m_\z/2} \: \int_{R_0}^R  \psi(s)\:  {A(s)}^{-1/2} \bigintssss_{B(n)/s}^\infty  {\bf c}(\lambda) \: e^{i\left\{\lambda s + t(s)\left(\lambda^2 + \frac{Q^2}{4}\right)\right\}} \:\hat{f}(\lambda)\: {|{\bf c}(\lambda)|}^{-2}\: A(s)\: d\lambda \: ds  \\
& \le & \int_{B(n)/R}^\infty  \left|\hat{f}(\lambda)\right| \: {|{\bf c}(\lambda)|}^{-1} \left|\bigintssss_{\max\left\{\frac{B(n)}{\lambda}, R_0\right\}}^R \psi(s) A(s)^{1/2}\: e^{i\left\{\lambda s + t(s)\left(\lambda^2 + \frac{Q^2}{4}\right)\right\}} \: ds  \right| d\lambda \\
&=& \int_{B(n)/R}^{B(n)/R_0}  \left|\hat{f}(\lambda)\right| \: {|{\bf c}(\lambda)|}^{-1} \left|\bigintssss_{B(n)/ \lambda}^R \psi(s) A(s)^{1/2}\: e^{i\left\{\lambda s + t(s)\left(\lambda^2 + \frac{Q^2}{4}\right)\right\}} \: ds  \right| d\lambda \\
&& +  \int_{B(n)/R_0}^\infty  \left|\hat{f}(\lambda)\right| \: {|{\bf c}(\lambda)|}^{-1} \left|\bigintssss_{R_0}^R \psi(s) A(s)^{1/2}\: e^{i\left\{\lambda s + t(s)\left(\lambda^2 + \frac{Q^2}{4}\right)\right\}} \: ds  \right| d\lambda \\
&=& I_3+I_4 \:.
\end{eqnarray*}

We first estimate $I_3$. By the Cauchy-Schwartz inequality,

\begin{eqnarray*}
 I_3 &=& \int_{B(n)/R}^{B(n)/R_0}  \left|\hat{f}(\lambda)\right| \: {|{\bf c}(\lambda)|}^{-1} \left|\bigintssss_{B(n)/ \lambda}^R \psi(s) A(s)^{1/2}\: e^{i\left\{\lambda s + t(s)\left(\lambda^2 + \frac{Q^2}{4}\right)\right\}} \: ds  \right| d\lambda \\
& \le & \int_{B(n)/R}^\infty  \left|\hat{f}(\lambda)\right| \: {|{\bf c}(\lambda)|}^{-1} \left|\bigintssss_{B(n)/ \lambda}^R \psi(s) A(s)^{1/2}\: e^{i\left\{\lambda s + t(s)\left(\lambda^2 + \frac{Q^2}{4}\right)\right\}} \: ds  \right| d\lambda \\
& \le & {\left(\int_0^\infty {\left(\lambda^2 + \frac{Q^2}{4}\right)}^{1/4} \:{\left|\hat{f}(\lambda)\right|}^2 \: {|{\bf c}(\lambda)|}^{-2}\: d\lambda \right)}^{1/2} \\
&& \times {\left(\int_{B(n)/R}^\infty \frac{1}{{\left(\lambda^2 + \frac{Q^2}{4}\right)}^{1/4}} {\left|\bigintssss_{B(n)/\lambda}^R \psi(s) A(s)^{1/2}\: e^{i\left\{\lambda s + t(s)\left(\lambda^2 + \frac{Q^2}{4}\right)\right\}} \: ds  \right|}^2 d\lambda \right)}^{1/2} \\
&=& {\|f\|}_{H^{1/4}(S)} {\left(\int_{B(n)/R}^\infty G_2(\lambda) d\lambda\right)}^{1/2} \:.
\end{eqnarray*}
Again by Fubini's theorem and Lemma \ref{oscillatory_integral_estimate} we have,
\begin{eqnarray*}
&& \int_{B(n)/R}^\infty G_2(\lambda) d\lambda \\
&=& \int_{B(n)/R}^\infty \frac{1}{{\left(\lambda^2 + \frac{Q^2}{4}\right)}^{1/4}} {\left|\bigintssss_{B(n)/\lambda}^R \psi(s) A(s)^{1/2}\: e^{i\left\{\lambda s + t(s)\left(\lambda^2 + \frac{Q^2}{4}\right)\right\}} \: ds  \right|}^2 d\lambda \\
&=& \int_{B(n)/R}^\infty \frac{1}{{\left(\lambda^2 + \frac{Q^2}{4}\right)}^{1/4}} \\
&& \times \bigintssss_{B(n)/\lambda}^R \bigintssss_{B(n)/\lambda}^R \psi(s) \overline{\psi(s')}\: {\left(A(s)A(s')\right)}^{1/2} \:e^{i\left\{\lambda (s-s') + (t(s)-t(s'))\left(\lambda^2 + \frac{Q^2}{4}\right)\right\}} \: ds \: ds' \: d\lambda \\
&=& \int_0^R \psi(s)  {A(s)}^{1/2} \int_0^R \overline{\psi(s')} {A(s')}^{1/2} \\
&& \times \left(\bigintssss_{\max\left\{\frac{B(n)}{s},\frac{B(n)}{s'}\right\}}^\infty \frac{e^{i\left\{\lambda(s-s')+(t(s)-t(s'))\left(\lambda^2 + \frac{Q^2}{4}\right)\right\}}}{{\left(\lambda^2 + \frac{Q^2}{4}\right)}^{1/4}} d\lambda\right) ds'ds \\
& \le & c\:  \int_0^R {A(s)}^{1/2} \int_0^R \frac{{A(s')}^{1/2}}{{|s-s'|}^{1/2}} \: ds'ds  \\
& < &  +\infty\:.
\end{eqnarray*}
Therefore,
\begin{equation} \label{thm_pf_eq14}
I_3 \le c \: {\|f\|}_{H^{1/4}(S)}\:.
\end{equation}

Next we estimate $I_4$. By the Cauchy-Schwartz inequality,
\begin{eqnarray*}
 I_4 &=& \int_{B(n)/R_0}^\infty  \left|\hat{f}(\lambda)\right| \: {|{\bf c}(\lambda)|}^{-1} \left|\bigintssss_{R_0}^R \psi(s) A(s)^{1/2}\: e^{i\left\{\lambda s + t(s)\left(\lambda^2 + \frac{Q^2}{4}\right)\right\}} \: ds  \right| d\lambda \\
& \le & {\left(\int_0^\infty {\left(\lambda^2 + \frac{Q^2}{4}\right)}^{1/4} \:{\left|\hat{f}(\lambda)\right|}^2 \: {|{\bf c}(\lambda)|}^{-2}\: d\lambda \right)}^{1/2} \\
&& \times {\left(\int_{B(n)/R_0}^\infty \frac{1}{{\left(\lambda^2 + \frac{Q^2}{4}\right)}^{1/4}} {\left|\bigintssss_{R_0}^R \psi(s) A(s)^{1/2}\: e^{i\left\{\lambda s + t(s)\left(\lambda^2 + \frac{Q^2}{4}\right)\right\}} \: ds  \right|}^2 d\lambda \right)}^{1/2} \\
&=& {\|f\|}_{H^{1/4}(S)} {\left(\int_{B(n)/R_0}^\infty G_3(\lambda) d\lambda\right)}^{1/2} \:.
\end{eqnarray*}

Again by Fubini's theorem we have,
\begin{eqnarray*}
&& \int_{B(n)/R_0}^\infty G_3(\lambda) d\lambda \\
&=& \int_{B(n)/R_0}^\infty \frac{1}{{\left(\lambda^2 + \frac{Q^2}{4}\right)}^{1/4}} {\left|\bigintssss_{R_0}^R \psi(s) A(s)^{1/2}\: e^{i\left\{\lambda s + t(s)\left(\lambda^2 + \frac{Q^2}{4}\right)\right\}} \: ds  \right|}^2 d\lambda \\
&=& \int_{B(n)/R_0}^\infty \frac{1}{{\left(\lambda^2 + \frac{Q^2}{4}\right)}^{1/4}} \\
&& \times \bigintssss_{R_0}^R \bigintssss_{R_0}^R \psi(s) \overline{\psi(s')}\: {\left(A(s)A(s')\right)}^{1/2} \:e^{i\left\{\lambda (s-s') + (t(s)-t(s'))\left(\lambda^2 + \frac{Q^2}{4}\right)\right\}} \: ds \: ds' \: d\lambda \\
&=& \int_{R_0}^R \psi(s)  {A(s)}^{1/2} \int_{R_0}^R \overline{\psi(s')} {A(s')}^{1/2} \\
&& \times \left(\bigintssss_{\frac{B(n)}{R_0}}^\infty \frac{e^{i\left\{\lambda(s-s')+(t(s)-t(s'))\left(\lambda^2 + \frac{Q^2}{4}\right)\right\}}}{{\left(\lambda^2 + \frac{Q^2}{4}\right)}^{1/4}} d\lambda\right) ds'ds \:.
\end{eqnarray*}
Now as $R_0 \le s \le R$, we have
\begin{equation*}
\frac{B(n)}{R} \le \frac{B(n)}{s} \le \frac{B(n)}{R_0}\:,
\end{equation*}
and similarly for $s'$. Thus in particular,
\begin{equation} \label{thm_pf_eq15}
\frac{B(n)}{R} \le \max\left\{\frac{B(n)}{s},\frac{B(n)}{s'}\right\} \le \frac{B(n)}{R_0}\:.
\end{equation}
Next we note that by Lemma \ref{oscillatory_integral_estimate},
\begin{equation*}
\left|\bigintssss_{\max\left\{\frac{B(n)}{s},\frac{B(n)}{s'}\right\}}^\infty \frac{e^{i\left\{\lambda(s-s')+(t(s)-t(s'))\left(\lambda^2 + \frac{Q^2}{4}\right)\right\}}}{{\left(\lambda^2 + \frac{Q^2}{4}\right)}^{1/4}} d\lambda \right| \le  \frac{c}{{|s-s'|}^{1/2}}\:.
\end{equation*}
By (\ref{thm_pf_eq15}), we also have
\begin{eqnarray*}
\left| \bigintssss_{\max\left\{\frac{B(n)}{s},\frac{B(n)}{s'}\right\}}^{\frac{B(n)}{R_0}} \frac{e^{i\left\{\lambda(s-s')+(t(s)-t(s'))\left(\lambda^2 + \frac{Q^2}{4}\right)\right\}}}{{\left(\lambda^2 + \frac{Q^2}{4}\right)}^{1/4}} d\lambda \right| & \le & \bigintssss_{\max\left\{\frac{B(n)}{s},\frac{B(n)}{s'}\right\}}^{\frac{B(n)}{R_0}} \frac{1}{\lambda^{1/2}}\: d\lambda \\
& \le &  \bigintssss_{\frac{B(n)}{R}}^{\frac{B(n)}{R_0}} \frac{1}{\lambda^{1/2}} \: d\lambda \:.
\end{eqnarray*}
Thus it is justified to decompose the following integral as
\begin{eqnarray*}
&&\bigintssss_{\frac{B(n)}{R_0}}^\infty \frac{e^{i\left\{\lambda(s-s')+(t(s)-t(s'))\left(\lambda^2 + \frac{Q^2}{4}\right)\right\}}}{{\left(\lambda^2 + \frac{Q^2}{4}\right)}^{1/4}} d\lambda \\
&=& \bigintssss_{\max\left\{\frac{B(n)}{s},\frac{B(n)}{s'}\right\}}^\infty \frac{e^{i\left\{\lambda(s-s')+(t(s)-t(s'))\left(\lambda^2 + \frac{Q^2}{4}\right)\right\}}}{{\left(\lambda^2 + \frac{Q^2}{4}\right)}^{1/4}} d\lambda \\
&& -  \bigintssss_{\max\left\{\frac{B(n)}{s},\frac{B(n)}{s'}\right\}}^{\frac{B(n)}{R_0}} \frac{e^{i\left\{\lambda(s-s')+(t(s)-t(s'))\left(\lambda^2 + \frac{Q^2}{4}\right)\right\}}}{{\left(\lambda^2 + \frac{Q^2}{4}\right)}^{1/4}} d\lambda \:.
\end{eqnarray*}
Hence,
\begin{eqnarray*}
&&\left|\bigintssss_{\frac{B(n)}{R_0}}^\infty \frac{e^{i\left\{\lambda(s-s')+(t(s)-t(s'))\left(\lambda^2 + \frac{Q^2}{4}\right)\right\}}}{{\left(\lambda^2 + \frac{Q^2}{4}\right)}^{1/4}} d\lambda \right|\\
&\le & \left|\bigintssss_{\max\left\{\frac{B(n)}{s},\frac{B(n)}{s'}\right\}}^\infty \frac{e^{i\left\{\lambda(s-s')+(t(s)-t(s'))\left(\lambda^2 + \frac{Q^2}{4}\right)\right\}}}{{\left(\lambda^2 + \frac{Q^2}{4}\right)}^{1/4}} d\lambda \right| \\
&& + \left| \bigintssss_{\max\left\{\frac{B(n)}{s},\frac{B(n)}{s'}\right\}}^{\frac{B(n)}{R_0}} \frac{e^{i\left\{\lambda(s-s')+(t(s)-t(s'))\left(\lambda^2 + \frac{Q^2}{4}\right)\right\}}}{{\left(\lambda^2 + \frac{Q^2}{4}\right)}^{1/4}} d\lambda \right| \\
& \le & c \left( \frac{1}{{|s-s'|}^{1/2}} + 1\right)\:.
\end{eqnarray*}
 So we get that
\begin{equation*}
\int_{B(n)/R_0}^\infty G_3(\lambda) d\lambda \le c\: \int_{R_0}^R {A(s)}^{1/2} \int_{R_0}^R {A(s')}^{1/2} \left( \frac{1}{{|s-s'|}^{1/2}} + 1\right) \:ds'\:ds< +\infty\:.
\end{equation*}
Then it follows that
\begin{equation} \label{thm_pf_eq16}
I_4 \le c \: {\|f\|}_{H^{1/4}(S)}\:.
\end{equation}
Therefore, combining (\ref{thm_pf_eq14}) and (\ref{thm_pf_eq16}), we get that \begin{equation} \label{thm_pf_eq17}
\int_{R_0}^R \left|T_6f(s)\right|\: A(s)\: ds \le c \: {\|f\|}_{H^{1/4}(S)}\:.
\end{equation}
Similarly,
\begin{equation} \label{thm_pf_eq18}
\int_{R_0}^R \left|T_7f(s)\right|\: A(s)\: ds \le c \: {\|f\|}_{H^{1/4}(S)}\:.
\end{equation}

\medskip

Finally, we estimate $T_8$ by using the estimate (\ref{thm_pf_eq13}) on the error term $\mathscr{E}_2$. Combining the aforementioned estimate, the Cauchy-Schwarz inequality and the estimate of ${|{\bf c}(\lambda)|}^{-2}$ (\ref{plancherel_measure}), one has
\begin{eqnarray*}
\left|T_8f(s)\right| & \le & c \: {A(s)}^{-1/2}\int_{B(n)/s}^\infty |{\bf c}(\lambda)| {(1+|\lambda|)}^{-1} \left|\hat{f}(\lambda)\right| {|{\bf c}(\lambda)|}^{-2} d\lambda \\
& \le & c\: \: {A(s)}^{-1/2} {\left(\int_0^\infty {\left(\lambda^2 + \frac{Q^2}{4}\right)}^{1/4} {\left|\hat{f}(\lambda)\right|}^2 {|{\bf c}(\lambda)|}^{-2}\: d\lambda \right)}^{1/2} {\left(\int_{B(n)/s}^\infty \frac{d\lambda}{\lambda^{5/2}}\right)}^{1/2} \\
& = & c \: {A(s)}^{-1/2}\: s^{3/4} \: {\|f\|}_{H^{1/4}(S)}\:.
\end{eqnarray*}

Hence
\begin{eqnarray} \label{thm_pf_eq19}
\int_{R_0}^R \left|T_8f(s)\right|\: A(s)\: ds &\le & c \: {\|f\|}_{H^{1/4}(S)} \int_{R_0}^R \: s^{3/4} \: {A(s)}^{1/2}\: ds \nonumber \\
& \le & c \: {\|f\|}_{H^{1/4}(S)} \:.
\end{eqnarray}
So from (\ref{thm_pf_eq17})-(\ref{thm_pf_eq19}), we get
\begin{equation*}
I_2 \le c \: {\|f\|}_{H^{1/4}(S)} \:.
\end{equation*}
This completes the proof of Theorem \ref{maximal_bddness_thm}.

\section{A counter-example for lower regularity}
In this section we make use of several standard notions in Harmonic Analysis on Riemannian Symmetric spaces. For unexplained terminologies and more details, we refer to \cite{Helgason}.

\medskip

Let $G=SL(2,\C)$ and $K$ be its maximal compact subgroup $SU(2)$. Here,
\begin{equation*}
A = \left\{a_t=
\begin{pmatrix}
e^t &0 \\
0 &e^{-t}
\end{pmatrix} : t \in \R
\right\}\:,
\end{equation*}
and $\Sigma_+$ consists of a single root $\alpha$ that occurs with multiplicity $2$.  We normalize $\alpha$ so that $\alpha(log\: a_t)=t$. Every $\lambda \in \C$ can be identified with an element in $\mathfrak{a}^*_{\C}$ by $\lambda = \lambda \alpha$. We see that in this identification (the half-sum of positive roots counted with multiplicity) $\rho=1,\: Q=2\rho=2$ and
\begin{equation*}
G/K \cong \mathbb{H}^3(-1)\:,
\end{equation*}
where $\mathbb{H}^3(-1)$ is the $3$-dimensional Real Hyperbolic space with constant sectional curvature $-1$. Then by \cite[p. 432, Theorem 5.7]{Helgason} we get the following information for $G/K$:
\begin{itemize}
\item For $\lambda \in (0,\infty)$, the Spherical function $\varphi_\lambda$ is given by,
\begin{equation*}
\varphi_\lambda(s) = \frac{\sin(\lambda s)}{\lambda \sinh(s)}\:.
\end{equation*}
\item $G$-invariant Riemannian volume measure: $\sinh^2 s \: ds \: dk$, where $ds$ and $dk$ are the Lebesgue measure on $(0,\infty)$ and the Haar measure on $SU(2)$ respectively.
\item Plancherel measure: $\lambda^2 \: d\lambda$, where $d\lambda$ is the Lebesgue measure on $(0,\infty)$.
\end{itemize}

By the well-known properties of the Abel transform on Ch\'ebli-Trim\`eche Hypergroups \cite{BX} (for Symmetric Spaces see \cite{Anker} and also \cite[Lemma 2.3]{Helgason-b}), we have the following commutative diagram, where
\begin{itemize}
\item $\mathscr{A}_1$ is the Abel transform defined from the space of $K$-biinvariant $L^2$-Schwartz class functions $\mathscr{S}^2(G//K)$ onto the space of even Schwartz class functions on $\R$, ${\mathscr{S}(\R)}_{even}$\:.
\item $\mathscr{A}_2$ is the Abel transform defined from the space of radial Schwartz class functions on $\R^3$, $\mathscr{S}(\R^3)_{radial}$ onto ${\mathscr{S}(\R)}_{even}$\:.
\item $\wedge$ denotes the Spherical Fourier transform from $\mathscr{S}^2(G//K)$ onto ${\mathscr{S}(\R)}_{even}$\:.
\item $\mathscr{F}$ denotes the Euclidean Spherical Fourier transform from $\mathscr{S}(\R^3)_{radial}$ onto ${\mathscr{S}(\R)}_{even}$\:.
\item $\sim$ denotes the 1-dimensional Euclidean Fourier transform from ${\mathscr{S}(\R)}_{even}$ onto itself.
\end{itemize}

\[
\begin{tikzcd}[row sep=1.4cm,column sep=1.4cm]
\mathscr{S}^2(G//K)\arrow[r,"\mathscr{A}_1"] \arrow[dr,swap,"\wedge"] & {\mathscr{S}(\R)}_{even}
\arrow[d,"\sim"] &
\arrow[l,"\mathscr{A}_2",swap]  \mathscr{S}(\R^3)_{radial} \arrow[dl,"\mathscr{F}"] \\
&  {\mathscr{S}(\R)}_{even}&
\end{tikzcd}
\]

Then, since all the maps above are topological isomorphisms, defining $\mathscr{A}:=\mathscr{A}^{-1}_2 \circ \mathscr{A}_1$, one can reduce matters to the following simplified commutative diagram:

\[
\begin{tikzcd}[row sep=1.4cm,column sep=1.4cm]
\mathscr{S}^2(G//K)\arrow[r,"\mathscr{A}"] \arrow[dr,swap,"\wedge"] & \mathscr{S}(\R^3)_{radial}
\arrow[d,"\mathscr{F}"]   \\
&  {\mathscr{S}(\R)}_{even}&
\end{tikzcd}
\]

Hence, for $f \in \mathscr{S}^2(G//K)$, we get
\begin{equation} \label{abel_rel}
\hat{f}(\lambda)= \mathscr{F}(\mathscr{A}f)(\lambda)\:.
\end{equation}

We now present the proof of Theorem \ref{sharpness_thm}.
\begin{proof}[Proof of Theorem \ref{sharpness_thm}]
Let $f \in \mathscr{S}^2(G//K)$. Then by the preceding arguments, $\mathscr{A}f \in \mathscr{S}(\R^3)_{radial}$. We first show that the maximal function corresponding to the solution of the Schr\"odinger equation with initial data $f$ on $G/K$ is comparable to its Euclidean counterpart with $\mathscr{A}f$ as the initial data. Indeed for any $t>0$, using (\ref{abel_rel}) we have,
\begin{eqnarray*}
S_t f(a_s) &=& \int_0^\infty \varphi_\lambda (a_s) \: e^{it(\lambda^2+1)} \: \hat{f}(\lambda) \: \lambda^2 \: d\lambda \\
&=& \int_0^\infty \frac{\sin(\lambda s)}{\lambda \sinh(s)} \: e^{it(\lambda^2+1)} \: \mathscr{F}(\mathscr{A}f)(\lambda) \: \lambda^2 \: d\lambda \\
&=& \left(\frac{e^{it} s}{\sinh(s)}\right) \int_0^\infty \frac{\sin(\lambda s)}{\lambda s} \: e^{it\lambda^2} \: \mathscr{F}(\mathscr{A}f)(\lambda) \: \lambda^2 \: d\lambda \\
&=& \left(\frac{e^{it} s}{\sinh(s)}\right) \int_0^\infty \left(\sqrt{2}\: \Gamma\left(\frac{3}{2}\right) \frac{J_{\frac{1}{2}}(\lambda s)}{{(\lambda s)}^{\frac{1}{2}}} \right)\: e^{it\lambda^2} \: \mathscr{F}(\mathscr{A}f)(\lambda) \: \frac{\lambda^2 \: d\lambda}{\sqrt{2}\: \Gamma(\frac{3}{2})} \\
&=& \left(\frac{e^{it} s}{\sinh(s)}\right) \tilde{S}_t (\mathscr{A}f)(s)\:,
\end{eqnarray*}
where $\tilde{S}_t (\mathscr{A}f)$ is the solution of the Schr\"odinger equation with initial data $\mathscr{A}f$ on $\R^3$. Thus we note that for some positive constant $c$,
\begin{eqnarray} \label{equality_1}
{\left\|\displaystyle\sup_{0<t<1} \left| S_t f(\cdot)\right|\right\|}_{L^1(B_{100})} &=& \int_0^{100} \displaystyle\sup_{0<t<1} \left| S_t f(a_s)\right| \: \sinh^2s\:ds \nonumber\\
& \asymp &  \int_0^{100} \displaystyle\sup_{0<t<1} \left|\tilde{S}_t (\mathscr{A}f)(s)\right| \: s^2\:ds \nonumber\\
&=& c \:{\left\|\displaystyle\sup_{0<t<1} \left| \tilde{S}_t (\mathscr{A}f)(\cdot)\right|\right\|}_{L^1(B(o,100))} \:.
\end{eqnarray}

\medskip

For $\beta>0$, we next consider the inhomogeneous Sobolev spaces of radial functions on $\R^3$ \cite[p. 135]{Prestini}:
\begin{equation*}
H^{\beta}(\R^3)_{radial} = \left\{g \in L^2(\R^3)_{radial}: \int_{\R^3} {\left(1+{|\xi|}^2\right)}^\beta \:{\left|\mathscr{F}g(\xi)\right|}^2\: d\xi < \infty\right\}\:.
\end{equation*}

Then again using (\ref{abel_rel}), we get that there exists  a positive constant $c$ such that for any $\beta >0$,
\begin{eqnarray} \label{equality_2}
{\|f\|}_{H^\beta(G/K)} &=& {\left(\int_0^\infty {(\lambda^2 + 1)}^\beta \:{\left|\hat{f}(\lambda)\right|}^2 \: \lambda^2 \: d\lambda\right)}^{1/2} \nonumber\\
&=& {\left(\int_0^\infty {(\lambda^2 + 1)}^\beta \:{\left|\mathscr{F}(\mathscr{A}f)(\lambda)\right|}^2 \: \lambda^2 \: d\lambda\right)}^{1/2} \nonumber\\
&=& c \: {\|\mathscr{A}f\|}_{H^\beta(\R^3)}\:.
\end{eqnarray}

\medskip

Finally we choose and fix $\beta \in (0, 1/4)$. We note by \cite[Theorem 1]{Prestini} that there exists a radial Schwartz class function $g$ on $\R^3$ that fails the maximal inequality:
\begin{equation*}
\:{\left\|\displaystyle\sup_{0<t<1} \left| \tilde{S}_t g(\cdot)\right|\right\|}_{L^1(B(o,100))} \le c_n {\|g\|}_{H^\beta(\R^3)}\:,
\end{equation*}
for all $c_n>0$. Then by (\ref{equality_1}) and (\ref{equality_2}) it follows that, $\mathscr{A}^{-1}g$ is a radial $L^2$-Schwartz class function on $G/K$ that fails the maximal inequality (\ref{maximal_bddness_inequality}) for the geodesic ball $B_{100}$.
\end{proof}

\section{Concluding remarks}
In this section, we make some remarks and pose some new problems.
\subsection{The fractional Schr\"odinger equation:}
\begin{enumerate}
\item For $a>1$, let us consider the fractional Schr\"odinger equation on $S$ given by
\begin{equation} \label{frac_schrodinger}
\begin{cases}
	 i\frac{\partial u}{\partial t} =-{(-\Delta)}^{a/2} u\:,\:  (x,t) \in S \times \R \\
	u(0,\cdot)=f\:,\: \text{ on } S \:.
	\end{cases}
\end{equation}
Note that in Theorem \ref{maximal_bddness_thm} and Corollary \ref{pointwise_conv_cor}, we have only considered the case $a=2$. Now for a radial function $f$ belonging to the $L^2$-Schwartz class, the solution to (\ref{frac_schrodinger}) is given by
\begin{equation*}
S_t f(x):= \int_{0}^\infty \varphi_\lambda(x)\:e^{it{\left(\lambda^2 + \frac{Q^2}{4}\right)}^{a/2}}\:\hat{f}(\lambda)\: {|{\bf c}(\lambda)|}^{-2}\: d\lambda\:.
\end{equation*}
Then considering the corresponding maximal function and proceeding verbatim as in the proof of Theorem \ref{maximal_bddness_thm}, one can reduce matters to proving an analogue of Lemma \ref{oscillatory_integral_estimate}. More precisely, under the hypothesis of Lemma \ref{oscillatory_integral_estimate}, one needs to get an estimate of the form:
\begin{equation} \label{frac_oscillatory_integral_inequality}
\left|\bigintssss_{\max\left\{\frac{B(n)}{s},\frac{B(n)}{s'}\right\}}^\infty \frac{e^{i\left\{\lambda(s-s')+(t(s)-t(s')){\left(\lambda^2 + \frac{Q^2}{4}\right)}^{a/2}\right\}}}{{\left(\lambda^2 + \frac{Q^2}{4}\right)}^{1/4}} d\lambda \right| \le \frac{c}{{|s-s'|}^{1/2}}\:.
\end{equation}
In this setup, the modified phase function is of the form
\begin{equation*}
\psi(\lambda):= e^{i\left\{v \lambda \pm {(\lambda^2 +1)}^{a/2}\right\}}\:.
\end{equation*}
Now explicitly locating the stationary point of the above phase function, unlike in the case $a=2$, does not seem to be an easy task and thus obtaining (\ref{frac_oscillatory_integral_inequality}) becomes technically challenging. This difficulty in principle, can be attributed to the spectral gap of the Laplace-Beltrami operator, an intrinsic property of non-flat, noncompact spaces such as Damek-Ricci spaces.

\medskip

\item However, if one considers the fractional
Schr\"odinger equation corresponding to the shifted Laplace-Beltrami operator $\tilde{\Delta}:=\Delta + (Q^2/4)$ on $S$, that is, for $a>1$,
\begin{equation} \label{frac_schrodinger'}
\begin{cases}
	 i\frac{\partial u}{\partial t} =-{(-\tilde{\Delta})}^{a/2} u\:,\:  (x,t) \in S \times \R \\
	u(0,\cdot)=f\:,\: \text{ on } S \:,
	\end{cases}	
\end{equation}
then for a radial function $f$ belonging to the $L^2$-Schwartz class, the solution to (\ref{frac_schrodinger'}) is given by
\begin{equation*}
S_t f(x):= \int_{0}^\infty \varphi_\lambda(x)\:e^{it\lambda^a} \:\hat{f}(\lambda)\: {|{\bf c}(\lambda)|}^{-2}\: d\lambda\:.
\end{equation*}
In this case, working with the Sobolev spaces defined in (\ref{sobolev_space_defn}) or the inhomogeneous Sobolev spaces corresponding to $\tilde{\Delta}$,
\begin{equation*}
\tilde{H}^\alpha(S):=\left\{f \in L^2(S): {\|f\|}_{\tilde{H}^\alpha(S)}:= {\left(\int_0^\infty {\left(1+ \lambda^2 \right)}^\alpha \: {|\hat{f}(\lambda)|}^2 \: {|{\bf c}(\lambda)|}^{-2}\: d\lambda\right)}^{1/2}< \infty\right\}\:,
\end{equation*}
the proofs of Lemma \ref{oscillatory_integral_estimate} and Theorems  \ref{maximal_bddness_thm} and \ref{sharpness_thm} can be carried out verbatim to get obvious analogues of Theorem \ref{maximal_bddness_thm}, Corollary \ref{pointwise_conv_cor} and Theorem \ref{sharpness_thm} for the fractional operator (\ref{frac_schrodinger'}), for all $a>1$.
\end{enumerate}
\subsection{Generalization of Theorem \ref{sharpness_thm}:} In the special case of $\mathbb{H}^3 \cong SL(2,\C)/SU(2)$, using the similarity of the spherical function with its Euclidean counterpart, we could essentially reduce matters to $\R^3$. This is not the case for a general Damek-Ricci space however. Now explicit constructions of counter-examples in the literature (for instance \cite{DK, Sjolin, Prestini}) crucially use translations, dilations and the explicit knowledge of their interactions with the Fourier transform. Since these Euclidean features do not hold in the great generality of Damek-Ricci spaces, construction of counter-examples for lower regularity is non-trivial and is an interesting problem on its own.

\section*{Acknowledgements} The author is thankful to Prof. Swagato K. Ray for comments and suggestions. The author is supported by a Research Fellowship of Indian Statistical Institute.

\bibliographystyle{amsplain}

\end{document}